\newtheorem{thm}{Theorem}[section]
\newtheorem{lem}[thm]{Lemma}
\numberwithin{equation}{section}
\title{Theoretical Study of Pest Control Using Stage Structured Natural Enemies with Maturation Delay: A Crop-Pest-Natural Enemy Model}
\author{Kunwer Singh Jatav \and Joydip Dhar}
\author{{\bf Kunwer Singh Jatav$^*$,  Joydip Dhar}\\[0.1in] Department of
Applied Sciences,\\ ABV-Indian Institute of Information Technology
and Management
\\ Gwalior(M.P.)-474015, INDIA\\ E-mail: $^*sing1709@gmail.com$, $jdhar@iiitm.ac.in$}
\begin{document}
\maketitle
\begin{abstract}
In the natural world, there are many insect species whose individual members have a life history that
takes them through two stages, immature and mature. Moreover, the rates of survival, development, and reproduction almost always depend on age, size, or development stage. Keeping this in mind, in this paper, a three species crop-pest-natural enemy food chain model with two stages for natural enemies is investigated. Using characteristic equations, a set of sufficient conditions for local asymptotic stability of all the feasible equilibria is obtained.  Moreover, using approach as in \citep{BerettaKuang2002}, the possibility of the existence of a Hopf bifurcation for the interior equilibrium with respect to maturation delay is explored, which shows that the maturation delay plays an important role in the dynamical behavior of three species system. Also obtain some threshold values of maturation delay for the stability-switching of the particular system. In succession, using the normal form theory and center
manifold argument, we derive the explicit formulas which determine the stability and direction of bifurcating
periodic solutions. Finally, a numerical simulation for supporting the theoretical analysis is given.
\end{abstract}
{\bf Keywords:} Food chain, maturation delay, stability-switch, Hopf bifurcation, chaos.\\[0.1in]
\section{Introduction}\label{introduction}
  It is a well known fact that pest is a harmful insect and its outbreak often cause serious ecological and economic problems \citep{kaminska2004geographical,weaver1992pesticide}. Evidence indicates that annually the pests cause 25\% loss in rice, 5-10\%
 in wheat, 30\% in pulses, 35\% in oilseeds, 20\% in sugarcane and 50\% in cotton \citep{Dhaliwal1996}. Now a days, many pest control methods are available, such as biological, cultural, physical and chemical methods \citep{franz1961biological,van1988biological,vincent2003management,hoyt1969integrated}. However farmers mostly use pesticides to control pests because of its efficiency and convenience. The chemical pesticide kills not only pests but it also kills their natural
enemies. Actually, when pests are caught or poisoned to a large extent, their natural enemies become extinct due to no food and afterwards, when the pesticide intensity decreases, then the pests increase rapidly. Due to this reason, chemical control has become challenged. Furthermore,
the common practice proves that long-term adopting chemical control may give rise to disastrous results, for example, environmental
contamination, toxicosis of the man and animals and so on. Thus the pesticide pollution is also recognized as a major health hazard to human beings and to natural enemies \citep{butler1969monitoring,dahal1995study,kaminska2004geographical,kishimba2004status}. On the other hand, it is a well known that the biological control method is harmless to human, animal and environment. Biological control is generally used to control a particular pest using a chosen living organism; this chosen organism might be a predator, parasite or disease which attacks on the harmful insect pest. The last few years have seen an sudden increase of interest in the study of biological pests control using prey-predator interaction \citep{Jiao2008a,Liu2003,dong2006extinction,Liu1995,WWang2001,Song2006,RShiLChen2009}.
\par Moreover, effective use of biological control often requires a good understanding of the biology of the pest, its natural
enemies and their interaction, as well as the ability to identify various life stages of relevant insects in the crops. Again, the formulations of stage structured population models are the recognition that individuals of many species have life-histories composed of a sequence of stages within which their
characteristics are broadly similar to those of other individuals in the same stage and totally
different, from those of individuals in other stages. In insect population, such stages are particularly easy
to recognize, being separated by short events such as moult or pupation. Many researchers studied stage-structured models before 1990 \citep{BarclayDriessche1980,BenceNisbet1989,GurneyNisbetLawton1983,GurneyNisbet1985,Hasting1983,LandahlHanson1975,WoodBlytheGurneyNisbet1989}, but the real interest comes into picture on the stage structured
models after the work of Aiello and Freedman \citep{AielloFreedman1990}. They proposed a single species model with stage structure assuming an average age to maturity (i.e., as a constant time delay) which reflecting a delayed birth of immature and a reduced survival of immature to their maturity. The model is as follows:
\[\frac{dx(t)}{dt}=\beta y(t)-rx(t)-\beta e^{-r\tau} y(t-\tau),\]
\begin{equation}
\frac{dy(t)}{dt}=\beta e^{-r\tau} y(t-\tau)-\eta y^2(t),
\label{eq1.1}
\end{equation}
where $x(t)$ and $y(t)$ represent the immature and mature populations densities, respectively. Here, it is  assumed that at any time $t>0$, growth rate of immature population is proportional to the existing mature population with proportionality constant $\beta$; the death rate of immature population is $r$; the death rate of mature population is proportional to the square of the population with the proportionality constant $\eta$. The term $\beta e^{-r\tau} y(t-\tau)$ represents the immature who were born at time $t-\tau$ and survive at time $t$, therefore it represents the transformation of immature to mature, where $\tau$ represent a constant time to maturity. All the parameters $\tau$, $\beta$, $r$ and $\eta$ are positive constants.
\par Further, the single species model (\ref{eq1.1}) is extended by many researchers into different kinds of stage-structured models and obtained significant results \citep{AielloFreedmanWu1952,CaoFanGard1992,FreedmanWu1991,FreedmanSoWu,HuoLiAgarwal2001,KGMagnusson1999}. Recently, many authors studied different kinds of predator-prey system with division of the predators into immature and mature class and a good number of research has been carried out \citep{WWang2001,RXu2004,SGao2008,KGMagnusson1999,SunHuoXiang2009,QuWei2007,HuHuang2010,SatioTakeuchi2003}. One interested model is suggested by Satio and Takeuchi \citep{SatioTakeuchi2003}. They considered two life stages for predator and proposed the following predator-prey model:
\[ \dot{x}(t)=x(t)\left(r_1-a_{11}x(t)-a_{13}y(t)\right),\]
\begin{equation}
\dot{Y}(t)=-r_2 Y(t)+a_{31} x(t)y(t)-a_{31} e^{-r_2 \tau}x(t-\tau)y(t-\tau),
\label{eq1.2}\end{equation}
\[ \dot{y}(t)=-r_3y^2(t)+a_{31} e^{-r_2 \tau}x(t-\tau)y(t-\tau),\]
where $x(t)$ is population density of prey, $Y(t)$ and $y(t)$ denote the densities of immature and mature predator population, respectively; $\tau$ represent a constant time to maturity for predator; $a_{13}$ is the per capita rate of predation; $a_{31}$ is the conversion rate and all other parameters have the similar meaning as in (\ref{eq1.1}).
\par Furthermore, three species food chain models are investigated by many researchers \citep{FreedmanWaltman1977,FreedmanSo1985,Hasting1991,FreedmanRuan1992,McCannYodzis1995,BoerKooiKooijman1999,Kuang2000,HsuaHwang2003}. One of noteworthy contribution is given by Kuang et al. \citep{HsuaHwang2003}, they considered a three trophic food chain model for plant-pest-natural enemy, but they ignored the stage structure phenomena of species.
\par The aim of this paper is to study a crop-pest-natural enemy model with two life stages of natural enemy. The paper is organized as follows: in section \ref{model}, model development is discussed, the positivity and boundedness are established in the section \ref{positivityboundedness}. In section \ref{stability}, all the feasible equilibria and their local stability behavior are studied. The stability and direction of Hopf bifurcation is analyzed in section \ref{direction}. Further, in section \ref{numerical}, a set of numerical simulations is given to verify all the major analytical findings. Finally, conclusions for this paper are given in the last section.

\section{Proposed Mathematical Model } \label{model}
In this section, our main aim is to propose a mathematical model for the interaction of plant-pest-natural enemy. Since plant-hoppers are serious pests for the rice crops and these are  suppressed by Lycosa tarantula and other spiders, it is well documented in a report of the Indian Council of Agricultural Research (ICAR) \citep{Birthal2004}. The tarantula species has two major life stages, namely, immature and mature; only mature population can harvest the pest and reproduce a new offspring. In modelling process, we assume that $x(t)$, $y(t)$, $z_1(t)$ and $z_2(t)$ are densities of crop, pest, immature and mature natural enemy at time $t$, respectively. The parameters $a_1$, $b_1$ are respectively the intrinsic growth rate and overcrowding rate of crops; $c_1$, $\alpha_1$ are per capita predation rate of crop by the pest and the corresponding growth rate of pest, respectively. The parameters  $c_2$ is per capita predation rate of pest by the natural enemy and $\alpha_2$ is the corresponding growth rate of mature natural enemy. Here, $d_1,\ d_2, \ d_3$ are the natural death rates of pests, immature and mature natural enemies, respectively. Further, $\tau$ is the maturation delay from immature to mature natural enemies, the term $\alpha_2 e^{-d_2 \tau}y(t-\tau)z_2(t-\tau)$ represents the transformation of immature to mature population. Keeping this biological situation  in mind and motivated from the modelling ideas of \citep{AielloFreedman1990,SatioTakeuchi2003}, in this paper, we propose a three species stage structured crop-pest-natural enemy model as follows:
\[ \frac{dx(t)}{dt}=x(t)\left(a_1-b_1x(t)-c_1y(t) \right),\]
\[ \frac{dy(t)}{dt}=y(t)\left(\alpha_1x(t)-d_1-c_2z_2(t)\right),\]
\begin{equation}
 \frac{dz_1(t)}{dt}=\alpha_2y(t)z_2(t)-d_2z_1(t)-\alpha_2e^{-d_2\tau}y(t-\tau)z_2(t-\tau),
\label{eq2.1}
\end{equation}
\[\frac{dz_2(t)}{dt}=\alpha_2e^{-d_2\tau}y(t-\tau)z_2(t-\tau)-d_3z_2(t),\]
The model completes with the following set of initial conditions:
\[ x(\theta)=\phi_1(\theta),\ y(\theta)=\phi_2(\theta), \ z_1(\theta)=\psi_1(\theta),\]
\begin{equation}
z_2(\theta)=\psi_2(\theta),\ \phi_i(0)>0, \ \psi_i(0)>0,\ \ \theta\in [-\tau,0],\ \ i=1,2,\label{eq2.3}
\end{equation}
where $(\phi_1,\phi_2,\psi_1,\psi_2)\in C \left([-\tau,0],R_{+}^4\right)$, the Banach space of continuous functions mapping on the interval $\left[-\tau,0\right]$ into $R_{+}^4$. For continuity of the initial conditions, we further require
\begin{equation}
\psi_1(0)=\int_{-\tau}^{0} \alpha_2\phi_2(s)\psi_2(s)e^{d_2s}ds,\label{eq2.4}
\end{equation}
where $\psi_1(0)$ represents the accumulated survivors of those natural enemy members who were born between $-\tau$ and $0$.
In the next section, we will discuss the positivity and boundedness of solutions of the system (\ref{eq2.1}) with initial conditions (\ref{eq2.3}) and (\ref{eq2.4}).
\section{Positivity and boundedness}\label{positivityboundedness}
Positivity means that the species is persistent and boundedness implies a natural restriction. Therefore, we can state and prove the following lemmas for the positivity and boundedness of solutions of the system (\ref{eq2.1}):
\begin{lem}\label{positivity}
The solutions of system (\ref{eq2.1}) with initial conditions (\ref{eq2.3}) and (\ref{eq2.4}) are positive, for all $t\geq 0$.
\end{lem}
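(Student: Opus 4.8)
The plan is to treat the four components in turn, using that each equation either has a right-hand side of the form (that component)$\,\times\,$(a continuous coefficient) or becomes tractable after an integrating factor. Throughout I would work on the maximal interval $[0,T)$ on which the solution is defined (local existence and uniqueness being standard, since the vector field is locally Lipschitz in the current and delayed states), where all components are continuous.

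\emph{The crop and pest.} Integrating $\dot x=x\,(a_1-b_1x-c_1y)$ over $[0,t]$ gives
\[ x(t)=\phi_1(0)\,\exp\!\left(\int_0^t\big(a_1-b_1x(s)-c_1y(s)\big)\,ds\right), \]
which is strictly positive because $\phi_1(0)>0$. Applying the same computation to $\dot y=y\,(\alpha_1x-d_1-c_2z_2)$ yields $y(t)=\phi_2(0)\exp\!\big(\int_0^t(\alpha_1x(s)-d_1-c_2z_2(s))\,ds\big)>0$; this step uses only the continuity of $z_2$, not its sign, so no circularity arises.

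\emph{The mature natural enemy.} I would rewrite the fourth equation as $\dot z_2+d_3z_2=\alpha_2e^{-d_2\tau}y(t-\tau)z_2(t-\tau)$, so that
\[ z_2(t)=e^{-d_3t}\left[\psi_2(0)+\alpha_2e^{-d_2\tau}\int_0^t y(s-\tau)\,z_2(s-\tau)\,e^{d_3s}\,ds\right], \]
and then argue by the method of steps. On $[0,\tau]$ the delayed arguments lie in $[-\tau,0]$, where $y=\phi_2\ge0$ and $z_2=\psi_2\ge0$, so the bracket is at least $\psi_2(0)>0$ and hence $z_2>0$ there. Since $y>0$ on all of $[0,T)$, feeding the positivity of $z_2$ on $[(k-1)\tau,k\tau]$ into the same formula on $[k\tau,(k+1)\tau]$ keeps the integrand nonnegative, so inductively $z_2>0$ on $[0,T)$.

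\emph{The immature natural enemy.} This is the delicate case, because the $z_1$-equation contains the \emph{negative} delayed term $-\alpha_2e^{-d_2\tau}y(t-\tau)z_2(t-\tau)$, so a bare integrating factor does not exhibit the sign; here the compatibility condition (\ref{eq2.4}) is exactly what is needed. Solving the linear ODE for $z_1$, substituting $u=s-\tau$ in the delayed integral, and using $\psi_1(0)=\int_{-\tau}^0\alpha_2\phi_2(s)\psi_2(s)e^{d_2s}\,ds$ to cancel the leftover boundary integral, the expression collapses to
\[ z_1(t)=\int_{t-\tau}^{t}\alpha_2\,y(s)\,z_2(s)\,e^{-d_2(t-s)}\,ds, \]
which is positive for $t>0$ since $y,z_2>0$ on $(0,T)$, and equals $\psi_1(0)>0$ at $t=0$. (Equivalently, one verifies directly that this integral satisfies the $z_1$-equation with initial value $\psi_1(0)$ and invokes uniqueness.) All four components being positive on $[0,T)$, the boundedness established in the next lemma then rules out finite-time blow-up, so $T=\infty$ and positivity holds for all $t\ge0$. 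The only genuine obstacle is the bookkeeping for $z_1$: recognising that (\ref{eq2.4}) is precisely the identity that turns $z_1$ into the manifestly positive integral above.
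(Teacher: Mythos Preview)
Your proof is correct and follows the same overall strategy as the paper: method of steps for $z_2$, then the integral representation $z_1(t)=\int_{t-\tau}^t \alpha_2 y(s)z_2(s)e^{-d_2(t-s)}\,ds$ obtained from the compatibility condition~(\ref{eq2.4}). The one noteworthy difference is that for $x$ and $y$ you use the exact exponential formulas $x(t)=\phi_1(0)\exp(\int_0^t\cdots)$ and $y(t)=\phi_2(0)\exp(\int_0^t\cdots)$, which give positivity on the whole existence interval in one stroke; the paper instead establishes differential inequalities (e.g.\ $\dot z_2\ge -d_3 z_2$, $\dot y\ge y(-d_1-c_2 z_2)$) and propagates positivity of \emph{all three} of $x,y,z_2$ interval-by-interval on $[n\tau,(n+1)\tau]$. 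Your route is a genuine streamlining, since the $x$- and $y$-equations contain no delay and their positivity requires nothing about the sign of $z_2$; the paper's ordering ($z_2\to y\to x$ on each subinterval) is an artifact of working with one-sided bounds rather than the exact solution formula. Your closing remark that boundedness precludes finite-time blow-up is also a point the paper leaves implicit.
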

\begin{proof}
Let $\left(x(t),y(t),z_1(t),z_2(t)\right)$ be a solution of system (\ref{eq2.1}) with initial conditions (\ref{eq2.3}) and (\ref{eq2.4}). Let us first consider $z_2(t)$ for $t\in[0,\tau]$. Noting that $\phi_2(\theta)\geq 0$, $\psi_2(\theta)\geq 0$ for $\theta \in [-\tau,0]$, we obtain from the fourth equation of system (\ref{eq2.1}) that
\begin{equation}
\frac{dz_2(t)}{dt}=\alpha_2e^{-d_2\tau}\phi_2(t-\tau)\psi_2(t-\tau)-d_3z_2(t)\geq -d_3z_2(t).
\nonumber\end{equation}
It thus follows for $t\in [0,\tau]$,
\begin{equation}
z_2(t)\geq z_2(0)e^{-d_3t}>0.
\nonumber\end{equation}
For $t\in [0,\tau]$, the second equation of the system (\ref{eq2.1}) can be rewritten as
\begin{equation}
\frac{dy(t)}{dt}\geq y(t)\left(-d_1-c_1z_2(t)\right).\nonumber\\
\end{equation}
A standard comparison argument shows that for $t\in [0,\tau]$,
\begin{equation}
y(t)\geq y(0)\exp\left( \int_0^t\left(-d_1 -c_1z_2(s)ds\right)\right)>0.\nonumber\end{equation}
Similarly, it follows from the first equation of system (\ref{eq2.1}) that for $t\in [0,\tau]$,
\begin{equation}
\frac{dx(t)}{dt}\geq -b_1x^2(t)-c_1x(t)y(t),\nonumber
\end{equation}
which evidences that
\begin{equation}
x(t)\geq \frac{x(0)\exp\left(-c_1\int_0^t y(s)ds\right)}{1+b_1x(0)\int_0^t \exp\left(-c_1\int_0^s y(u)du\right)}>0.\nonumber\end{equation}
In a similar way, we can treat the intervals $[\tau,2\tau],\ldots,[n\tau,(n+1)\tau],n\in N$. Thus by induction, we establish that $x(t)> 0$, $y(t)> 0$ and $z_2(t)> 0$ for all $t\geq 0$.
\par Finally from third equation of (\ref{eq2.1}) and (\ref{eq2.4}),  we have
\[ z_1(t)=\int_{t-\tau}^t\alpha_2y(s)z_2(s)e^{-d_2(t-s)}ds.\]
Therefore the positivity of $z_1(t)$ follows.
\end{proof}
\begin{lem}\label{boundedness}
The solutions of the system (\ref{eq2.1}) with initial conditions (\ref{eq2.3}) and (\ref{eq2.4}) are bounded.
\end{lem}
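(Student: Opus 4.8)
The plan is to build one linear combination $W(t)$ of the four state variables whose derivative along solutions is free of delayed arguments and obeys a differential inequality $\dot W(t)+\eta W(t)\le M$ for suitable constants $\eta,M>0$, and then to finish with a routine comparison argument together with the positivity furnished by Lemma~\ref{positivity}.

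First I would treat the crop density separately: since $y(t)\ge 0$ by Lemma~\ref{positivity}, the first equation of~(\ref{eq2.1}) gives $\dot x(t)\le x(t)\bigl(a_1-b_1 x(t)\bigr)$, and comparison with the logistic equation yields $\limsup_{t\to\infty} x(t)\le a_1/b_1$, so $x$ is bounded. For $y$, $z_1$, $z_2$ I would proceed jointly, exploiting the structure of~(\ref{eq2.1}): the maturation terms in the $z_1$- and $z_2$-equations are equal and opposite, so $\dot z_1+\dot z_2=\alpha_2 y z_2-d_2 z_1-d_3 z_2$ carries no delay; the bilinear terms $-c_1 xy$ and $+\alpha_1 xy$ cancel in $\alpha_1\dot x+c_1\dot y$; and the predation loss $-c_1 c_2\, y z_2$ left over there is cancelled by the gain $+c_1 c_2\, y z_2$ arising from $\tfrac{c_1 c_2}{\alpha_2}(\dot z_1+\dot z_2)$. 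Accordingly I would set
\[ W(t)=\alpha_1 x(t)+c_1 y(t)+\frac{c_1 c_2}{\alpha_2}\bigl(z_1(t)+z_2(t)\bigr)\ge 0, \]
and a direct computation gives
\[ \dot W(t)=\alpha_1 a_1 x(t)-\alpha_1 b_1 x^2(t)-c_1 d_1 y(t)-\frac{c_1 c_2 d_2}{\alpha_2} z_1(t)-\frac{c_1 c_2 d_3}{\alpha_2} z_2(t). \]

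Finally, choosing $\eta=\min\{d_1,d_2,d_3\}>0$ and adding $\eta W(t)$ to the last identity, the coefficients of $y$, $z_1$, $z_2$ become nonpositive, so by positivity $\dot W(t)+\eta W(t)\le \alpha_1\bigl((a_1+\eta)x(t)-b_1 x^2(t)\bigr)\le \alpha_1(a_1+\eta)^2/(4b_1)=:M$. An integrating-factor argument then yields $W(t)\le W(0)e^{-\eta t}+\tfrac{M}{\eta}\bigl(1-e^{-\eta t}\bigr)$, so $W$ is bounded on $[0,\infty)$ and $\limsup_{t\to\infty} W(t)\le M/\eta$; since all four components are nonnegative, this forces each of $x(t),y(t),z_1(t),z_2(t)$ to be bounded, proving the lemma. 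The only genuine step is finding the coefficients in $W$ that simultaneously kill the $xy$ cross terms, the predation term $y z_2$, and the delayed maturation terms; after that the argument is mechanical, and I anticipate no real obstacle — the sign condition $b_1>0$ is exactly what makes the quadratic in $x$ bounded above.
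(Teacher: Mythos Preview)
Your argument is correct and is essentially the paper's own proof: the function $W(t)$ you define is exactly the paper's $V(t)=\alpha_1\alpha_2 x+c_1\alpha_2 y+c_1c_2 z_1+c_1c_2 z_2$ divided by $\alpha_2$, and the differential inequality $\dot W+\eta W\le M$ is the same mechanism. The only cosmetic differences are that the paper takes $p=\min\{a_1,d_1,d_2,d_3\}$ (including $a_1$) rather than your $\eta=\min\{d_1,d_2,d_3\}$, and it does not bother with the separate logistic bound on $x$ --- which you do not need either, since $x$ is already controlled as a component of $W$.
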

\begin{proof}
Let $V(t)=\alpha_1\alpha_2x(t)+c_1\alpha_2y(t)+c_1c_2z_1(t)+c_1c_2z_2(t)$, calculating the derivative of $V(t)$ with respect to $t$ along the positive solution of the system (\ref{eq2.1}), we have
\[ \dot{V}(t)=a_1\alpha_1\alpha_2x(t)-b_1\alpha_1\alpha_2x^2(t)-c_1d_1\alpha_2y(t)-c_1c_2d_2z_1(t)-c_1c_3d_3z_2(t). \]
Taking $p=\min\{a_1,d_1,d_2,d_3\}$, we obtain that
\[ \dot{V}(t)+pV(t)\leq 2a_1\alpha_1\alpha_2x(t)-b_1\alpha_1\alpha_2x^2(t).\]
Hence there exists a positive constant $K=a_1^2\alpha_1\alpha_2/b_1$ such that
\[ \dot{V}(t)+pV(t)\leq K,\]
thus, we get
\[ V(t)\leq \left( V(0)-K/p\right)e^{-pt}.\]
Therefore, $V(t)$ is bounded, i.e., each solution of the system (\ref{eq2.1}) is bounded.
\end{proof}
In the next section, we will investigate the feasible equilibrium of the system (\ref{eq2.1}) and study their stability.
\section{Nonnegative equilibria and their local stability}\label{stability}
 In this section, our main objective is to investigate the local behavior of all feasible equilibria and existence of a Hopf bifurcation at  interior equilibrium. The equation for the variable $z_1$ in the system (\ref{eq2.1}) can be rewritten as
\begin{eqnarray}
\frac{dz_1(t)}{dt}&=&\alpha_2y(t)z_2(t)-d_2z_1(t)-\alpha_2e^{-d_2\tau}y(t-\tau)z_2(t-\tau)\nonumber\\
&:=&-d_2z_1(t)+f\left( y(t),z_2(t),y(t-\tau),z_2(t-\tau)\right),\nonumber
\end{eqnarray}
if $y(t)$, $z_2(t)$ are bounded and $y(t)\rightarrow y^*$, $z_2\rightarrow z_2^*$ as $t\rightarrow \infty$, then $z_1(t)\rightarrow f\left( y^*,z_2^*,y^*,z_2^*\right)/d_2$ as $t\rightarrow \infty$, i.e., the asymptotic behavior of $z_1$ is completely dependent on $y(t)$ and $z_2(t)$. Hence, the asymptotic behavior of our proposed model will remain the same with the following reduced system:
\[ \frac{dx(t)}{dt}=x(t)\left(a_1-b_1x(t)-c_1y(t) \right),\]
\begin{equation}
\frac{dy(t)}{dt}=y(t)\left(\alpha_1x(t)-d_1-c_2z_2(t)\right),
\label{eq3.1}
\end{equation}
\[ \frac{dz_2(t)}{dt}=\alpha_2e^{-d_2\tau}y(t-\tau)z_2(t-\tau)-d_3z_2(t),\]
Using simple algebraic manipulations, we get four feasible equilibria for the system (\ref{eq3.1}), namely,
\begin{enumerate}
\item[(a)] trivial equilibrium $E_0(0,0,0)$;
\item[(b)] boundary equilibrium $E_1\left( a_1/b_1,0,0\right)$;
\item[(c)] planner equilibrium $E_2(\bar{x},\bar{y},0)$ exists only when ({\bf H1}) $a_1\alpha_1>b_1d_1$;
\item[(d)] interior equilibrium $E_3(x^*,y^*,z_2^*)$  exists if ({\bf H2}) $a_1\alpha_1\alpha_2>\Delta$.
\end{enumerate}
Where \[\bar{x}=\frac{d_1}{\alpha_1},\ \bar{y}=\frac{a_1\alpha_1-b_1d_1}{c_1\alpha_1},\ x^*=\frac{a_1\alpha_2-c_1d_3e^{d_2\tau}}{b_1\alpha_2},\  y^*=\frac{d_3e^{d_2\tau}}{\alpha_2},\]
\[  z_2^*=\frac{a_1\alpha_1\alpha_2-\Delta}{b_1c_2\alpha_2}, \ \Delta=c_1d_3\alpha_1e^{d_2\tau}+b_1d_1\alpha_2.\]
Further, ({\bf H2}) implies that
\[\tau<\frac{1}{d_2}\log\left( \frac{a_1\alpha_1\alpha_2-b_1d_1\alpha_2}{c_1d_3\alpha_1}\right):=\bar{\tau}.\]
The characteristic equation for trivial equilibrium $E_0(0,0,0)$ is given by
\begin{equation}
(\lambda-a_1)(\lambda+d_1)(\lambda-d_3)=0.\label{chartrivial}
\end{equation}
The characteristic equation (\ref{chartrivial}) has one positive and two negative roots, hence equilibrium $E_0$ is a unstable saddle point.
\par Similarly, the characteristic equation for boundary equilibrium $E_1$ is as follows:
\begin{equation}
(\lambda+d_3)(\lambda+a_1)\left(\lambda+d_1-\frac{a_1\alpha_1}{b_1}\right)=0.\label{charone}
\end{equation}
 Clearly, all the eigenvalues are negative only when $a_1\alpha_1<b_1d_1$, which stabilize $E_1$, otherwise it is unstable.
 Again, the characteristic equation for planner equilibrium $E_2(\bar{x},\bar{y},0)$ becomes
\begin{equation}
\left(\lambda+d_3-\alpha_2\bar{y}e^{-d_2\tau}\right)\left(\lambda^2+b_1\bar{x}\lambda+c_1\alpha_1\bar{x}\bar{y} \right)=0.\label{charboundary}
\end{equation}
Since, both roots of the quadratic equation $\lambda^2+b_1\bar{x}\lambda+c_1\alpha_1\bar{x}\bar{y}=0$ have negative real parts, hence the equilibrium $E_2$ is locally asymptotically stable if $\alpha_2\bar{y}e^{-d_2\tau}<d_3$ for all $\tau>(1/d_2)\log \alpha_2 \bar{y}/d_3:=\tau_{cr}$.
\par Finally, the characteristic equation for interior equilibrium $E_3(x^*,y^*,z_2^*)$ is given as:
\begin{equation}
\lambda^3+A_1(\tau)\lambda^2+A_2(\tau)\lambda+A_3(\tau)+\left(B_1(\tau)\lambda^2+B_2(\tau)\lambda+B_3 \right)e^{-\lambda \tau}=0,
\label{charinterior}\end{equation}
where
\[A_1(\tau)=b_1x^*(\tau)+d_3,\ A_2(\tau)=b_1d_3x^*(\tau)+c_1\alpha_1x^*(\tau)y^*(\tau),\]
\[A_3(\tau)=c_1d_3\alpha_1x^*(\tau)y^*(\tau),\  B_1=-\alpha_2y^*(\tau)e^{-d_2\tau},\]
\[B_2(\tau)=\alpha_2y^*(\tau)\left( c_2z_2^*(\tau)-b_1x^*(\tau)\right)e^{-d_2\tau},\] \[B_3(\tau)=\alpha_2x^*(\tau)y^*(\tau)\left(b_1c_2z_2^*(\tau)-c_1\alpha_1y^*(\tau)\right)e^{-d_2\tau}.\]
We write $A_i$, $B_i$ in place of $A_i(\tau)$, $B_i(\tau)$ for $i=1,2,3$ in the rest of the analysis.\\
The characteristic equation (\ref{charinterior}) can be rewritten as:
\begin{equation}
P(\lambda,\tau)+Q(\lambda,\tau)e^{-\lambda\tau}=0,
\label{eqPQ}\end{equation}
where
\begin{equation}
P(\lambda,\tau)=\lambda^3+A_1\lambda^2+A_2\lambda+A_3,\ Q(\lambda,\tau)=B_1\lambda^2+B_2\lambda+B_3.
\label{eq3.2}\end{equation}
When $\tau=0$, the characteristic equation (\ref{charinterior}) becomes
\begin{equation}
\lambda^3+b_1x^*\lambda^2+(c_1\alpha_1x^*y^*+c_2\alpha_2y^*z_2^*)\lambda+b_1c_2\alpha_2x^*y^*z_2^*=0.
\label{charinteriortau0}\end{equation}
Since $(A_1(0)+B_1(0))(A_2(0)+B_2(0))-(A_3(0)+B_3(0))=b_1c_1\alpha_1{x^*}^2y^*>0$, therefore, using Routh-Hurwitz criterion, all the solutions of the characteristic equation (\ref{charinteriortau0}) have negative real parts. Thus the interior equilibrium $E_3$ is locally asymptotically stable for $\tau=0$ if it exists.
\par In the following, we investigate the existence of purely imaginary roots $\lambda=i\omega(\omega>0)$ of characteristic equation (\ref{eqPQ}). We apply Beretta and Kuang \citep{BerettaKuang2002} geometric criterion which gives the existence of purely imaginary roots of a characteristic equation with delay dependent coefficients.
\begin{lem}\label{pureimaginary}
If ({\bf H2}) holds, then the following are satisfies:
\begin{enumerate}
\item $P(0,\tau)+Q(0,\tau)\neq 0$,
\item $P(i\omega,\tau)+Q(i\omega,\tau)\neq 0$ for all $\omega\in R$,
\item $\lim\sup\{|Q(\lambda,\tau)/P(\lambda,\tau)|\ : \ |\lambda|\rightarrow \infty,\ Re\lambda \geq 0 \}<1$,
\item $F(\omega,\tau)=|P(i\omega,\tau)|^2-|Q(i\omega,\tau)|^2$ for each $\tau$ has at most a finite number of real zeros,
\item each positive root $\omega(\tau)$ of $F(\omega,\tau)=0$ is continuous and differentiable in $\tau$ whenever it exists.
\end{enumerate}
\end{lem}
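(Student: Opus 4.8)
The plan is to verify the five conditions of Lemma~\ref{pureimaginary} one at a time, each reducing to an elementary computation using the explicit formulas for $A_i(\tau)$, $B_i(\tau)$ and the coordinates of $E_3$ under hypothesis (\textbf{H2}). For item~(1), I would substitute $\lambda=0$ in \eqref{eqPQ} to obtain $P(0,\tau)+Q(0,\tau)=A_3+B_3$, then insert $A_3=c_1 d_3\alpha_1 x^* y^*$ and $B_3=\alpha_2 x^* y^*(b_1 c_2 z_2^*-c_1\alpha_1 y^*)e^{-d_2\tau}$; using $y^*=d_3 e^{d_2\tau}/\alpha_2$ so that $\alpha_2 y^* e^{-d_2\tau}=d_3$, this collapses to $A_3+B_3=b_1 c_2 d_3 x^* y^* z_2^*>0$ whenever $E_3$ is positive, hence nonzero. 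For item~(2), I would evaluate $P(i\omega,\tau)+Q(i\omega,\tau)$ and separate real and imaginary parts; the imaginary part is $\omega\bigl(-\omega^2+A_2+B_2\bigr)$ and the real part is $-A_1\omega^2+A_3+(- B_1\omega^2+B_3)$, and I would argue that these two cannot vanish simultaneously (the $\omega=0$ case being item~(1), and for $\omega\neq 0$ the vanishing of the imaginary part forces $\omega^2=A_2+B_2$, which when substituted into the real part yields a strictly signed quantity, again by the sign relations among the coefficients).

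For item~(3), the point is that $P$ has degree $3$ in $\lambda$ while $Q$ has degree $2$, so $|Q(\lambda,\tau)/P(\lambda,\tau)|\to 0$ as $|\lambda|\to\infty$ uniformly; the $\limsup$ is therefore $0<1$, independent of the half-plane restriction. For item~(4), I would form $F(\omega,\tau)=|P(i\omega,\tau)|^2-|Q(i\omega,\tau)|^2$ and note that, as a function of $\omega$, it is a real polynomial — in fact a cubic in $u=\omega^2$ of the shape $u^3+(A_1^2-2A_2-B_1^2)u^2+(A_2^2-2A_1A_3-B_2^2+2B_1B_3)u+(A_3^2-B_3^2)$ — hence has at most finitely many (indeed at most three in $u$, so at most six in $\omega$) real zeros for each fixed $\tau$. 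For item~(5), since the coefficients $A_i(\tau),B_i(\tau)$ are real-analytic (in fact smooth) functions of $\tau$ on $[0,\bar\tau)$ — being built from $x^*(\tau),y^*(\tau),z_2^*(\tau)$ and $e^{-d_2\tau}$, all smooth — the coefficients of the cubic $G(u,\tau)=0$ in $u=\omega^2$ depend smoothly on $\tau$; at any $\tau_0$ where a positive simple root $u_0=\omega_0^2$ exists one has $\partial G/\partial u\neq 0$ there, so the implicit function theorem gives a smooth branch $\omega(\tau)$ near $\tau_0$, and I would remark that multiple roots occur only on a measure-zero set so that continuity/differentiability holds wherever a positive root exists in the generic sense needed for the Beretta--Kuang machinery.

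The only genuinely delicate point is item~(2): ruling out common imaginary roots of $P+Q$ for $\omega\neq 0$ requires showing that $-A_1\omega^2+A_3-B_1\omega^2+B_3$ does not vanish when $\omega^2=A_2+B_2$. The main obstacle is therefore bookkeeping: one must substitute the explicit coordinates of $E_3$, use the identity $\alpha_2 y^* e^{-d_2\tau}=d_3$ repeatedly to simplify $B_1=-d_3$ and reduce $A_i+B_i$ to the coefficients appearing in \eqref{charinteriortau0}, and then invoke the Routh--Hurwitz positivity already established there (namely $(A_1+B_1)(A_2+B_2)-(A_3+B_3)=b_1 c_1\alpha_1 {x^*}^2 y^*>0$) to conclude that the resulting expression is strictly positive. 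Everything else is routine degree-counting and smoothness of elementary functions, so once item~(2) is handled the remaining items follow immediately.
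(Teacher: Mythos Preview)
Your proposal is correct and follows essentially the same five-item verification as the paper. Two minor remarks: in item~(1) your final expression has a spurious factor of $y^*$ --- the correct simplification is $A_3+B_3=b_1c_2d_3x^*z_2^*$ (equivalently $b_1c_2\alpha_2x^*y^*z_2^*e^{-d_2\tau}$, as the paper writes it) --- but this does not affect the conclusion; and in item~(2) you are actually more careful than the paper, which merely writes out $P(i\omega,\tau)+Q(i\omega,\tau)$ and asserts it is nonzero without the substitution $\omega^2=A_2+B_2$ into the real part. Your observation that the Routh--Hurwitz identity $(A_1+B_1)(A_2+B_2)-(A_3+B_3)=b_1c_1\alpha_1{x^*}^2y^*$ persists for all $\tau\in[0,\bar\tau)$ (thanks to $\alpha_2y^*e^{-d_2\tau}=d_3$) is exactly what is needed to close that gap.
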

\begin{proof}
1. For $\tau\in[0,\bar{\tau})$,
\[ P(0,\tau)+Q(0,\tau)=b_1c_2\alpha_2x^*y^*z_2^*e^{-d_2\tau}\neq 0. \]
2. $P(i\omega,\tau)+Q(i\omega,\tau)=-(A_1+B_1)\omega^2+A_3+B_3+i[-\omega^3+(A_2+B_2)\omega]\neq 0$.\\
3. Since $P(\lambda,\tau)$ is a third degree polynomial in $\lambda$ and $Q(\lambda,\tau)$ second degree, hence,  $\lim\ \sup\{|Q(\lambda,\tau)/P(\lambda,\tau)|\ : \ |\lambda|\rightarrow \infty,\ Re\lambda \geq 0 \}=0<1 $.\\
4. Let $F$ be defined as\\
 \[F(\omega,\tau)=|P(i\omega,\tau)|^2-|Q(i\omega,\tau)|^2.\]
 From
 \[|P(i\omega,\tau)|^2=\omega^6 +\left(A_1^2-2A_2\right)\omega^4+\left(A_2^2-2A_1A_3
  \right)\omega^2+A_3^2 \]
and
\[|Q(i\omega,\tau)|^2=B_1^2\omega^4 +\left(B_2^2-2B_1B_3\right)\omega^2+B_3^2, \]
we have
\[F(\omega,\tau)=\omega^6+p(\tau)\omega^4+q(\tau)\omega^2+r(\tau), \]
where
\[ p(\tau)=\frac{(a_1\alpha_2-c_1d_3e^{d_2\tau})(a_1b_1\alpha_2-(b_1+2\alpha_1)c_1d_3e^{d_2\tau})}{b_1\alpha_2^2},\]
\[ q(\tau) = c_1^2\alpha_1^2{x^*}^2{y^*}^2+2b_1c_2\alpha_2x^*y^*z_2^*e^{-d_2\tau}-c_2\alpha_2^2{y^*}^2z_2^*e^{-2d_2\tau(2b_1x^*+c_2z_2^*)},\]
\[ r(\tau)=-3c_1d_3\alpha_1+(a_1\alpha_1-b_1d_1)\alpha_2e^{-d_2\tau}.\]
It is obvious that property (iv) is satisfied.\\
5. Since $F(\omega,\tau)$ is continuous in $\omega$ and $\tau$ and it is differentiable with respect to $\omega$. Therefore, from Implicit Function Theorem each root of $F(\omega,\tau)=0$ is continuous and differentiable in $\tau$.
\par Hence all the conditions of the Lemma are satisfied, which ensure the existence of purely imaginary roots for the characteristic equation (\ref{charinterior}).
\end{proof}
Now let $\lambda=i\omega\ (\omega>0)$ be a root of (\ref{charinterior}). Substituting it into (\ref{charinterior}) and separating the real and imaginary parts, we get
\[ B_2\omega \sin\omega\tau+\left(B_3-B_1\omega^2\right)\cos\omega\tau=A_1\omega^2-A_3,\]
\begin{equation}
\left(B_1\omega^2-B_3 \right)\sin\omega\tau + B_2\omega \cos\omega\tau=\omega^3-A_2\omega,
\label{sepri}
\end{equation}
which gives
\[ \sin\omega\tau=\frac{B_1\omega^5-\left(A_1B_2+A_2B_1+B_3\right)\omega^3+\left(A_2B_3+A_3
B_2\right)\omega}{B_1^2\omega^4+(B_2^2-2B_1B_3)\omega^2+B_3^2},\]
 \begin{equation}
 \cos\omega\tau=\frac{\left(B_2-A_1B_1\right)\omega^4+\left(A_1B_3+A_3B_1-A_2B_2\right)\omega^2-A_3B_3}{B_1^2\omega^4+(B_2^2-2B_1B_3)\omega^2+B_3^2}.
\label{sincosomega} \end{equation}
We can define the angle $\theta(\tau)\in[0,2\pi]$, $\forall$ $\tau\geq 0$ as the solution of (\ref{sincosomega}):
\[\sin\theta(\tau)=\frac{B_1\omega^5-\left[A_1B_2+A_2B_1+B_3\right]\omega^3+\left[A_2B_3+A_3B_2\right]\omega}{B_1^2\omega^4+[B_2^2-2B_1B_3]\omega^2+B_3^2}, \]
 \begin{equation}
 \cos\theta(\tau)=\frac{\left[B_2-A_1B_1\right]\omega^4+\left[A_1B_3+A_3B_1-A_2B_2
 \right]\omega^2-A_3B_3}{B_1^2\omega^4+[B_2^2-2B_1B_3]\omega^2+B_3^2},
\label{sincostheta} \end{equation}
where $\omega=\omega(\tau)$ and such $\theta(\tau)$ is uniquely well defined for  all $\tau$, so that $F(\omega(\tau),\tau)=0$. Hence  \begin{equation}
  \omega^6+p(\tau)\omega^4+q(\tau)\omega^2+r(\tau)=0.
 \label{omegaequation} \end{equation}
Again the polynomial function $F$ can be written as
 \[ F(\omega,\tau)=h(\omega^2,\tau),\]
 where $h$ is a cubic polynomial, defined by
 \[ h(z,\tau):=z^3+p(\tau)z^2+q(\tau)z+r(\tau).\]
Applying the Descartes' rule of signs for the number of positive roots of $h(z,\tau)=0$, we get the following four cases: \\
{\bf Case I}: Let
\[I_{11}=\{\tau\geq 0\ |\ p(\tau)>0\,\ q(\tau)>0\  \mbox{and} \ r(\tau)>0\}, \]
\[I_{12}=\{\tau\geq 0\ | \ r(\tau)>0,\ \mbox{at least one } p(\tau)<0\, \ q(\tau)<0\},\]
\[ I_1=I_{11}\cup I_{12}.\]
In the interval $I_{12}$, $h(z,\tau)=0$ either has $0$ or $2$ positive roots.  When the polynomial $h(z,\tau)$ has no positive zero in $I_{12}$, then it also has no positive zero in the interval $I_1$. Thus in this case, purely imaginary root of (\ref{charinterior}) never exists.\\
{\bf Case II}: Let
\[I_{21}=\{\tau\geq 0\ |\ p(\tau)>0\ \mbox{and} \ r(\tau)<0\},\]
\[I_{22}=\{\tau\geq 0\ | \ p(\tau)<0,\ q(\tau)<0\  \mbox{and} \ r(\tau)<0\},\]
\[I_{23}=\{\tau\geq 0\ | \ P(\tau)<0,\ q(\tau)>0\ \mbox{and}\ r(\tau)<0\}.\]
In the region $I_{23}$, either one or three positive zeros of $h(z,\tau)$ exist. Suppose only one positive zero is feasible in $I_{23}$, then in the interval $I_2=I_{21}\cup I_{22}\cup I_{23}$, $h(z,\tau)$ has only one positive zero. Therefore, $i\omega^*$ with $\omega^*=\omega(\tau^*)>0$ is a purely imaginary root of (\ref{charinterior}) if and only if $\tau^*$ is a zero of  the $S_n$, where
\[S_n(\tau)=\tau-\frac{\theta(\tau)+2n\pi}{\omega(\tau)},\ \tau\in I_2,\ \mbox{with}\ n\in N_0.\]
Now we will verify the following lemma:
\begin{lem}[Beretta and Kuang \citep{BerettaKuang2002}]\label{BerettaKuang}
Assume that $\omega(\tau)$ is a positive real root of (\ref{charinterior}) defined for $\tau\in I$, $I\subseteq R_{+0}$, and at some $\tau^*\in I$,
  \begin{equation}
  S_n(\tau^*)=0, \ \ \mbox{for some}\ n\in N_0.
  \end{equation}
Then a pair of simple conjugate pure imaginary roots $\lambda_+(\tau^*)=i\omega(\tau^*)$, $\lambda_-(\tau^*)=-i\omega(\tau^*)$ of (\ref{charinterior}) exists at  $\tau=\tau^*$ which crosses the imaginary axis from left to right if $\delta(\tau^*)>0$ and crosses the imaginary axis from right to left if $\delta(\tau^*)<0$, where
{\small
\begin{equation}
\delta(\tau^*)=Sign\left\{ \left.\frac{dRe(\lambda)}{d\tau}\right|_{\lambda=i\omega(\tau^*)}\right\}=Sign\left\{ \frac{\partial F}{\partial \omega}(\omega(\tau^*),\tau^*)\right\}Sign\left\{ \left.\frac{dS_n(\tau)}{d\tau}\right|_{\tau=\tau^*}\right\}.
\label{derivative}\end{equation}
}
\end{lem}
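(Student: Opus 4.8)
The plan is to adapt the geometric criterion of Beretta and Kuang \citep{BerettaKuang2002} to the characteristic equation (\ref{charinterior}). Write $G(\lambda,\tau):=P(\lambda,\tau)+Q(\lambda,\tau)e^{-\lambda\tau}$ for its left-hand side, with $P,Q$ as in (\ref{eq3.2}), fix $\tau^*\in I$ at which $S_n(\tau^*)=0$ for some $n\in N_0$, and put $\omega_0:=\omega(\tau^*)>0$. First I would verify that $S_n(\tau^*)=0$ is \emph{equivalent} to $i\omega_0$ being a root of $G(\cdot,\tau^*)$. Since $\omega_0$ is by construction a positive zero of $F(\omega,\tau^*)=|P(i\omega,\tau^*)|^2-|Q(i\omega,\tau^*)|^2$, we have $|P(i\omega_0,\tau^*)|=|Q(i\omega_0,\tau^*)|$, and this common modulus is nonzero: were it zero we would get $P(i\omega_0,\tau^*)=Q(i\omega_0,\tau^*)=0$, contradicting item~2 of Lemma~\ref{pureimaginary}. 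Hence $-P(i\omega_0,\tau^*)/Q(i\omega_0,\tau^*)$ lies on the unit circle and $G(i\omega_0,\tau^*)=0$ reduces to $e^{-i\omega_0\tau^*}=-P(i\omega_0,\tau^*)/Q(i\omega_0,\tau^*)$; splitting this into real and imaginary parts recovers exactly the identities (\ref{sincosomega}). As $\theta(\tau^*)\in[0,2\pi]$ was defined in (\ref{sincostheta}) to be the angle sharing that cosine and sine, the relation holds iff $\omega_0\tau^*=\theta(\tau^*)+2n\pi$, i.e. iff $S_n(\tau^*)=0$; the converse is the same chain read backwards.

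Next I would show the root is \emph{simple}, which is what lets the implicit function theorem supply a $C^1$ branch $\lambda(\tau)$ of roots of $G$ near $\tau^*$ with $\lambda(\tau^*)=i\omega_0$. For this it suffices that $G_\lambda(i\omega_0,\tau^*)\neq0$; if it vanished, eliminating $e^{-\lambda\tau}$ between $G=0$ and $G_\lambda=0$ would yield $P_\lambda Q-Q_\lambda P+\tau PQ=0$ at $(i\omega_0,\tau^*)$, and separating real and imaginary parts of this identity, combined with $|P|=|Q|$ there, forces $\partial F/\partial\omega(\omega_0,\tau^*)=0$, i.e. $\omega_0^2$ would be a multiple positive zero of $h(\cdot,\tau^*)$ — excluded in the case under consideration, where the relevant positive zero of $h$ is simple. (This is the very nondegeneracy that reappears as $F_\omega\neq0$ in the conclusion.)

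The core of the proof is the transversality computation. Differentiating $G(\lambda(\tau),\tau)=0$ in $\tau$ and using $e^{-\lambda\tau}=-P/Q$ (from $G=0$; legitimate since $Q\neq0$) gives, with subscripts denoting partial derivatives,
\[ \left(\frac{d\lambda}{d\tau}\right)^{-1}=-\,\left.\frac{P_\lambda Q-Q_\lambda P+\tau PQ}{P_\tau Q-Q_\tau P+\lambda PQ}\right|_{\lambda=i\omega_0,\ \tau=\tau^*}. \]
Because $\mathrm{sign}\,\mathrm{Re}(d\lambda/d\tau)=\mathrm{sign}\,\mathrm{Re}(d\lambda/d\tau)^{-1}$, it remains to evaluate the real part of the right-hand side at $\lambda=i\omega_0$. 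I would do this by writing $P(i\omega,\tau)=|P|e^{i\arg P}$ and $Q(i\omega,\tau)=|Q|e^{i\arg Q}$ and differentiating in $\tau$ the two relations that pin down $\omega(\tau)$ and $\theta(\tau)$, namely the modulus relation $F(\omega(\tau),\tau)=0$ (equivalently $|P|=|Q|$) and the argument relation $\omega(\tau)\tau=\theta(\tau)+2n\pi$; this expresses $\partial F/\partial\omega(\omega_0,\tau^*)$ and $dS_n/d\tau(\tau^*)$ through precisely the complex combinations $P_\lambda Q-Q_\lambda P$, $P_\tau Q-Q_\tau P$ and $PQ$ already present above. Substituting these back — the bookkeeping carried out in \citep{BerettaKuang2002} — collapses the real part to a positive multiple of $\frac{\partial F}{\partial\omega}(\omega_0,\tau^*)\cdot\frac{dS_n}{d\tau}(\tau^*)$, which is exactly (\ref{derivative}); hence $\lambda_\pm(\tau^*)=\pm i\omega_0$ is a pair of simple conjugate purely imaginary roots crossing the imaginary axis from left to right when $\delta(\tau^*)>0$ and from right to left when $\delta(\tau^*)<0$.

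The main obstacle is that final collapse: one must recognize that the numerator of $(d\lambda/d\tau)^{-1}$ is, up to a controllable positive factor, exactly the quantity one differentiates to obtain the $\omega$-variation of $|P|^2-|Q|^2$ (hence $\partial F/\partial\omega$), while the $\tau$-variation in the denominator encodes the winding of $\arg(-P/Q)$ that $S_n(\tau)$ records (hence $dS_n/d\tau$), so that the real part of their ratio factors as the product of these two signs. Once this identification is secured the remaining work is routine manipulation of moduli and arguments. By contrast, the equivalence of $S_n(\tau^*)=0$ with the existence of a purely imaginary root, and the simplicity of that root, are comparatively direct, using only Lemma~\ref{pureimaginary}, the definitions of $\theta,\omega,F$, and the simple-zero hypothesis on $h(\cdot,\tau^*)$.
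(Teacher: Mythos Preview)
The paper does not prove this lemma at all: it is stated as a citation of the general result of Beretta and Kuang \citep{BerettaKuang2002} and then immediately \emph{applied} (the line following the lemma simply checks that $\partial F/\partial\omega>0$ in the present setting and uses (\ref{derivative}) as a black box). So there is no ``paper's own proof'' to compare against; your proposal goes well beyond what the paper does by sketching the actual Beretta--Kuang argument.

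Your outline is essentially the correct route to a proof. The equivalence ``$S_n(\tau^*)=0 \Leftrightarrow i\omega_0$ is a root'' is exactly right and uses Lemma~\ref{pureimaginary} just as it should. The transversality formula you derive for $(d\lambda/d\tau)^{-1}$ is the standard one, and the identification of its real part with a positive multiple of $F_\omega\cdot S_n'$ is indeed the heart of \citep{BerettaKuang2002}. Two small points worth tightening if you were to write this out fully: (i) your simplicity argument asserts that $P_\lambda Q-Q_\lambda P+\tau PQ=0$ at $(i\omega_0,\tau^*)$ ``forces $\partial F/\partial\omega=0$''; this is true but not by merely ``separating real and imaginary parts'' --- you need to use $|P|=|Q|$ to rewrite $Q_\lambda\bar Q$ in terms of $Q_\lambda P\cdot\overline{P}/|P|^2$ before the imaginary part collapses to $-\tfrac12 F_\omega$; (ii) the lemma as stated does not explicitly assume that $\omega_0^2$ is a \emph{simple} zero of $h(\cdot,\tau^*)$, so your appeal to that hypothesis is really an appeal to the nondegeneracy $F_\omega\neq0$ implicit in the sign formula (\ref{derivative}) itself --- which is fine, but should be said explicitly.
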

Since $\left.\partial F(\omega,\tau)/\partial \omega\right|_{\omega=\omega(\tau^*)}=\left[6\omega^5+4p(\tau)\omega^3+2q(\tau)\omega\right]_{\omega=\omega(\tau^*)}> 0$, therefore, from (\ref{derivative}), we get
\[\delta(\tau^*)=Sign\left\{ \left.\frac{dRe(\lambda)}{d\tau}\right|_{\lambda=i\omega(\tau^*)}\right\}=Sign\left\{\left.\frac{dS_n(\tau)}{d\tau}\right|_{\tau=\tau^*}\right\}.
\]
Here, we can easily observe that $S_n(0)<0$ and $S_n(\tau)>S_{n+1}(\tau)$ $\forall$ $\tau\in I_2$, $n\in N_0$. Thus, if  $S_0$ has no zero in $I_2$, then the function $S_n$ also have no zero in $I_2$ and if the function $S_n$ has positive zeros, denoted by $\tau_n^j$ for some $\tau\in I_2,\ n\in N_0$, then without loss of generality, we may assume that
 \[\frac{dS_n(\tau_n^j)}{d\tau}\neq 0\ \mbox{with}\ S_n(\tau_n^j)=0.\]
 Applying similar logic as in \citep{BerettaKuang2002}, stability switches occur at the zeros of $S_0(\tau)$, denoted by $\tau_0^j$, if ({\bf H2}) holds. Let us assume that
 \[\tau^*=\min\{\tau\in I_2\ | \ S_0(\tau)=0\}\ \ \mbox{and}\ \ \tau^{**}=\max\{\tau\in I_2\ | \ S_0(\tau)=0\}. \]
 Using the Hopf bifurcation theorem for functional differential equation \citep{HaleLunel1993}, we can conclude the existence of Hopf bifurcation in the following theorem:
 \begin{thm}\label{theoremforbifurcation}
Let ({\bf H2}) hold. The local behavior of the system (\ref{eq2.1}) at interior equilibrium is described as:
\begin{enumerate}
\item If the function $S_0(\tau)$ has no positive zero in $I_2$, then the interior equilibrium $E_3(x^*,y^*,z_2^*)$ is locally asymptotically stable for all $\tau\geq 0$.
\item If the function $S_n(\tau)$ has at least positive zero in $I_2$ for some $n\in N_0$, then $E_3$ is locally asymptotically stable for $\tau\in[0,\tau^*)\cup(\tau^{**},\bar{\tau}]$ and unstable and a Hopf bifurcation occurs for $\tau\in(\tau^*,\tau^{**})$, i.e., stability switches of stability-instability-stability occur.\end{enumerate}
  \end{thm}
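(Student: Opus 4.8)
The plan is to combine the $\tau=0$ analysis already carried out with a continuity-of-roots argument and the Beretta--Kuang crossing lemma (Lemma~\ref{BerettaKuang}). First I would record two structural facts: at $\tau=0$ every root of (\ref{charinterior}) has negative real part (Routh--Hurwitz applied to (\ref{charinteriortau0})), and the roots of (\ref{charinterior}) depend continuously on $\tau$; together with condition (iii) of Lemma~\ref{pureimaginary}, which rules out roots escaping to infinity across the imaginary axis, this means the number of roots of (\ref{charinterior}) in $\{\mathrm{Re}\,\lambda>0\}$, counted with multiplicity, can change only at those $\tau$ for which (\ref{charinterior}) has a root on the imaginary axis. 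By Lemma~\ref{pureimaginary} the Beretta--Kuang criterion applies, so on $I_2$ these purely imaginary roots $\lambda=i\omega(\tau)$ occur exactly at the zeros of the functions $S_n(\tau)$, $n\in N_0$, and outside $I_2$ the cubic $h(z,\tau)$ has no positive root, hence no imaginary root exists there at all.

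For part (1): if $S_0$ has no positive zero in $I_2$, then, using $S_0(\tau)>S_n(\tau)$ for $n\ge1$ together with $S_n(0)<0$ and the continuity of each $S_n$, every $S_n$ is negative throughout $I_2$; combined with the previous paragraph there is no $\tau\ge0$ at which (\ref{charinterior}) possesses an imaginary root. Therefore the count of right half-plane roots stays at its value at $\tau=0$, which is zero, and $E_3(x^*,y^*,z_2^*)$ is locally asymptotically stable for all $\tau\ge0$.

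For part (2): since $S_0>S_n$ and $S_n(0)<0$, a positive zero of some $S_n$ forces, by the intermediate value theorem, a positive zero of $S_0$; hence $\tau^*=\min\{\tau\in I_2:S_0(\tau)=0\}$ and $\tau^{**}=\max\{\tau\in I_2:S_0(\tau)=0\}$ are well defined (finiteness of the zero set comes from property (iv)), and $S_0$, hence every $S_n$, is strictly negative on $[0,\tau^*)$. As above this gives no imaginary root on $[0,\tau^*)$, so $E_3$ remains locally asymptotically stable there. At $\tau=\tau^*$ the function $S_0$ passes from negative to positive, so $dS_0/d\tau(\tau^*)>0$; since $\partial F/\partial\omega(\omega(\tau^*),\tau^*)=[6\omega^5+4p\omega^3+2q\omega]_{\omega=\omega(\tau^*)}>0$ was already verified, (\ref{derivative}) gives $\delta(\tau^*)>0$, i.e. a simple pair of conjugate imaginary roots crosses the imaginary axis from left to right. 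Thus $E_3$ loses stability at $\tau^*$, and the crossing being simple and transversal, the Hopf bifurcation theorem for functional differential equations \citep{HaleLunel1993} yields a Hopf bifurcation there. Symmetrically, at $\tau=\tau^{**}$ the function $S_0$ changes sign from positive to negative, so $dS_0/d\tau(\tau^{**})<0$, $\delta(\tau^{**})<0$, a pair of roots crosses back from right to left, and a Hopf bifurcation occurs at $\tau^{**}$; for $\tau>\tau^{**}$ the function $S_0$ (hence every $S_n$) stays negative, so no further imaginary roots appear and $E_3$ is again locally asymptotically stable on $(\tau^{**},\bar\tau]$. On $(\tau^*,\tau^{**})$ at least one pair of roots has positive real part, so $E_3$ is unstable, which is the asserted stability--instability--stability switching.

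The step I expect to be the main obstacle is the bookkeeping on $(\tau^*,\tau^{**})$: strictly, one must track the zeros of all curves $S_n$, not only $S_0$, and show that each crossing changes the number of right half-plane roots by exactly two, with sign dictated by $\delta$, so that this number stays $\ge1$ throughout $(\tau^*,\tau^{**})$ and drops back to $0$ only at $\tau^{**}$; this relies on the ordering $S_0>S_1>\cdots$, on $S_n(0)<0$ to control the geometry of the zero set, and on the simplicity hypothesis $dS_n(\tau_n^j)/d\tau\neq0$. A secondary delicate point is confirming that $S_0<0$ on $(\tau^{**},\bar\tau]$, i.e. that $\tau^{**}$ is genuinely the last switch; this is read off from the sign pattern of $S_0$ near the right endpoint $\bar\tau$, where $E_3$ merges into the planar equilibrium $E_2$. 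The remaining verifications — simplicity of the imaginary roots and applicability of the FDE Hopf theorem — are routine given Lemmas~\ref{pureimaginary} and~\ref{BerettaKuang}.
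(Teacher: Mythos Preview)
Your proposal is correct and follows essentially the same approach as the paper. In fact the paper gives no self-contained proof of Theorem~\ref{theoremforbifurcation}: it assembles the $\tau=0$ Routh--Hurwitz calculation, Lemma~\ref{pureimaginary}, the ordering $S_n(\tau)>S_{n+1}(\tau)$ with $S_n(0)<0$, the sign identity $\delta(\tau^*)=\mathrm{Sign}\{dS_n/d\tau|_{\tau=\tau^*}\}$, and then simply writes ``Applying similar logic as in \citep{BerettaKuang2002}\ldots'' before stating the theorem; your argument is exactly the standard way to unpack that reference, and the two delicate points you flag (bookkeeping of all $S_n$ on $(\tau^*,\tau^{**})$ and the sign of $S_0$ past $\tau^{**}$) are genuine subtleties that the paper leaves implicit as well.
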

  \noindent
{\bf Case III}: If in the interval $I_3=I_{12}$, $h(z,\tau)=0$ has two positive roots, denoted by $\omega_1$ and $\omega_2$, we get following two sequences of functions on $I_3$:
\[S_n^{(1)}(\tau)=\tau-\frac{\theta_1(\tau)+2n\pi}{\omega_1(\tau)} \ ;\ S_n^{(2)}(\tau)=\tau-\frac{\theta_2(\tau)+2n\pi}{\omega_2(\tau)},\  \ n\in N_0,\]
where $\theta_1(\tau)$ and $\theta_2(\tau)$ are the solutions of (\ref{sincostheta}) when $\omega=\omega_1,\omega_2$ respectively. Similarly, we can also obtain for $\tau\in I_3$ that $S_n^{(k)}(0)\leq 0$ and $S_n^{(k)}(\tau)>S_{n+1}^{(k)}(\tau)$ with $n\in N_0$, $k=1,2$. Thus if $S_0^{(1)}(\tau)>S_0^{(2)}(\tau)$, then $S_n^{(1)}(\tau)>S_n^{(2)}(\tau)$ and hence stability switch depends on all real roots of $S_n^{(1)}(\tau)=0$, otherwise the stability switches depend on roots of both $S_n^{(1)}(\tau)=0$ and $S_n^{(2)}(\tau)=0$. Furthermore, we can also obtain the similar results stated in Theorem \ref{theoremforbifurcation}.\\
{\bf Case IV}: If in the interval $I_4=I_{23}$, $h(z,\tau)$ has three positive zeros, we can obtain the parallel results as in case III.
\section{Direction and stability of Hopf bifurcation}\label{direction}
 In the previous section, we obtained the conditions, under which system (\ref{eq3.1}) undergoes Hopf
bifurcation, taking maturation delay ($\tau$) as  the critical parameter. Using the normal form theory and center manifold reduction as described in Hassard et al. \citep{HassardKazarinoff1981}, we can investigate the direction of Hopf bifurcation and the properties of these bifurcating periodic solutions. Hence, we always assume that system (\ref{eq3.1}) undergoes Hopf bifurcations at the critical value $\tau^*$ of $\tau$ and there exists a pair of pure imaginary roots, i.e., $\pm i\omega(\tau^*)$ of the characteristic equation (\ref{charinterior}).
\par Using the Appendix \ref{appA}, we can compute the following values:
\[C_1(0)=\frac{i}{2\omega^*\tau^*}\left(g_{20}g_{11}-2|g_{11}|^2-\frac{|g_{02}|^2}{3} \right)+\frac{g_{21}}{2},\]
\[\mu_2=\frac{Re\{C_1(0)\}}{Re\{\lambda'(\tau^*)\}},\]
\begin{equation}
\beta_2=2Re\{C_1(0)\},
\label{parameters}\end{equation}
\[ T_2=\frac{I_m\{C_1(0)\}+\mu_2I_m\{ \lambda'(\tau^*)\}}{\omega^*\tau^*},\]
which determine the behavior of bifurcating periodic solution in the center manifold at the critical value $\tau^*$, i.e., $\mu_2$
determines the direction of the Hopf bifurcation: if $\mu_2 > 0 \ (\mu_2 < 0)$, then the Hopf bifurcation is supercritical
(subcritical) and the bifurcating periodic solution exists for $\tau > \tau^*\  (\tau < \tau^*)$; $\beta_2$ determines the stability of the
bifurcating periodic solution: the bifurcating periodic solution is stable (unstable) if $\beta_2 < 0 \ (\beta_2 > 0)$ and $T_2$
determines the period of the bifurcating periodic solution: the period increases (decreases) if $T_2 > 0 \ (T_2<0)$.
\section{Numerical simulation}\label{numerical}
To verify the previously established results, consider a three species crop-pest-natural enemy stage structured food chain model with the following parameter values:
\[ \frac{dx(t)}{dt}=x(t)\left(a_1-x(t)-y(t) \right),\]
\begin{equation}
\frac{dy(t)}{dt}=y(t)\left(\alpha_1 x(t)-d_1-0.6z_2(t)\right),
\label{numeq}
\end{equation}
\[\frac{dz_2(t)}{dt}=1.3e^{-0.4\tau}y(t-\tau)z_2(t-\tau)-0.3z_2(t).\]
If we choose $a_1=1$, $\alpha_1=0.1$ and $d_1=0.5$, then we obtain that for the trivial equilibrium $E_0(0,0,0)$, the characteristic equation has three eigenvalues, $\lambda_1=1,\ \lambda_2=-0.5$ and $\lambda_3=0.3$. This verifies that trivial equilibrium of the system (\ref{numeq}) is a unstable saddle point. Similarly, the characteristic equation for the boundary equilibrium $E_1(1,0,0)$ has $\lambda_1=-1, \lambda_2=-0.3$ and $\lambda_3=-0.2$ eigenvalues and hence the boundary equilibrium is locally asymptotically stable (see Figure \ref{nonnegativestable}(a)).
\par Now let $a_1=2$, $d_1=0.05$ and $\alpha_1=1.2$. Then the condition ({\bf H1}) for the positivity of equilibrium $E_2$ is satisfy and the characteristic equation for the equilibrium $E_2(0.0416667,1.95833,0)$ of the system (\ref{numeq}) has three eigenvalues, namely, $\lambda_1=-0.0208333 - 0.312222 i$, $\lambda_2=-0.0208333 + 0.312222 i$ and $\lambda_3=-0.3 + 2.54583 e^{-0.4 \tau}$ eigenvalues. Here, $\lambda_3$ is negative only when $\tau>\bar{\tau}=5.34608$, in this case, the planner equilibrium $E_2$ is locally asymptotically stable for $\tau>5.34608$ (see Figure \ref{nonnegativestable}(b)). We, mainly focus on the dynamics of the interior equilibrium. To study the local behavior of interior equilibrium $E_3(x^*(\tau),y^*(\tau),z_2^*(\tau))$ of the system (\ref{numeq}), we take the same parameters values for $a_1,\ d_1$ and $\alpha_1$. We obtain that the equilibrium is positive if $\tau<\tau_1=5.34608$ ( see Figure \ref{positiveequilibrium}).
\begin{figure}[!]
\centering
\includegraphics[height=2in,width=5.5in]{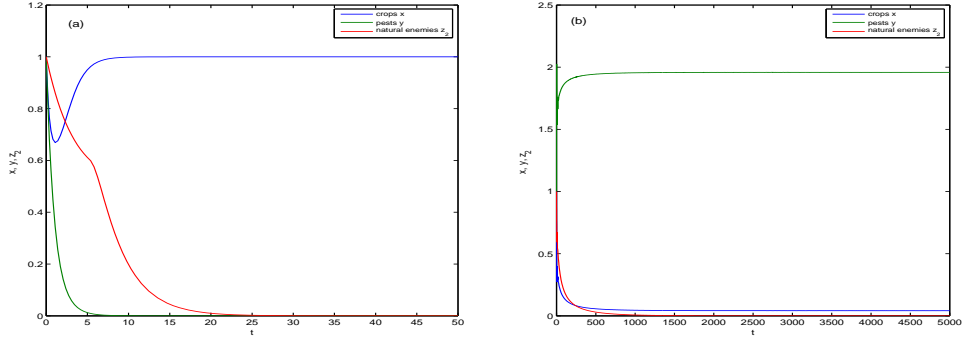}\\
\caption{Nonnegative equilibrium points are stable (a) boundary equilibrium $E_1$ is stable for $a_1=1,\ d_1=0.5,\ \alpha_1=0.1$; (b) planner equilibrium $E_2$ is stable for $a_1=2,\ d_1=0.05,\ \alpha_1=1.2$ and $\tau=5.4>\tau_{cr}=5.34608$.}
\label{nonnegativestable}
\end{figure}

\begin{figure}[!]
\centering
\includegraphics[height=2in,width=5.5in]{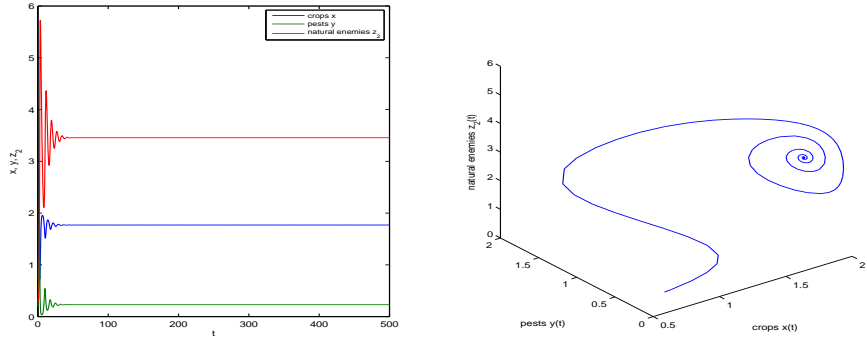}\\
\caption{The interior equilibrium $E_3$ is locally asymptotically stable for $\tau=0$.}
\label{stablewithoutstage}
\end{figure}

\begin{figure}[!]
\centering
\includegraphics[height=3in,width=4in]{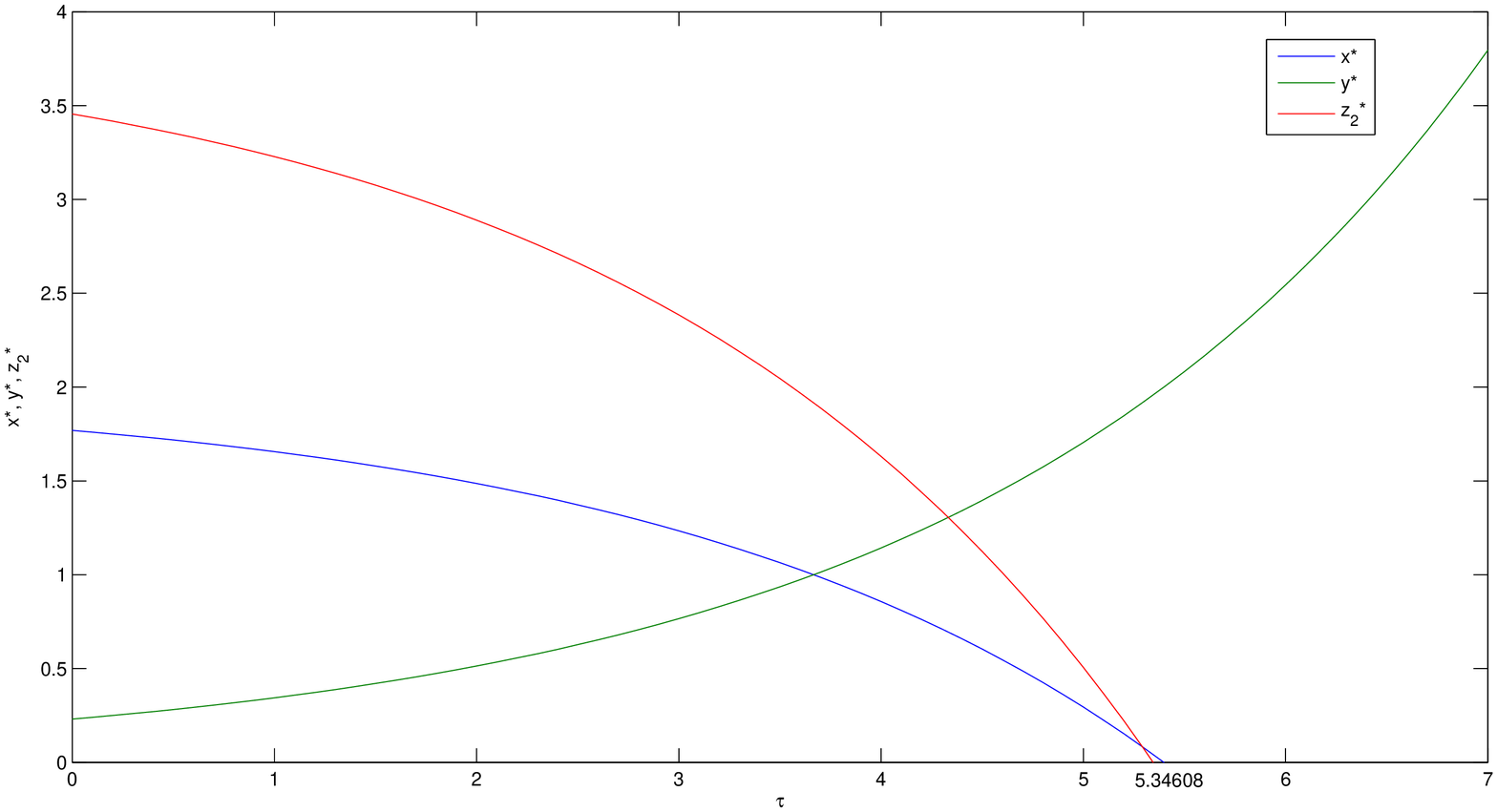}\\
\caption{The equilibrium $E_3(x^*,y^*,z_2^*)$ is positive for $\tau<\bar{\tau}=5.34608$.}
\label{positiveequilibrium}
\end{figure}
\par To apply the Descartes' rule of signs, we plot the coefficients $p(\tau)$, $q(\tau)$ and $r(\tau$) of the function $h(z,\tau)$ with the maturation delay $\tau$ (see Figure \ref{omegapositive}). We can easily check that in the intervals $I_{12}, I_{23}$, the function $h(z,\tau)=0$ has $0$ and $1$ positive roots respectively. Thus exactly one zero of $h(z,\tau)$ is feasible in only the interval $I_2=[0,2.59955)$ (see Figure \ref{omegapositive}(d)).
  Further, all the conditions of the lemma \ref{pureimaginary} are satisfied in the interval $I_2$. Now taking $S_0$ on one axis and $\tau$ on another axis, we obtain that the function $S_0(\tau)=\tau-\theta(\tau)/\omega(\tau)$ has two zeros $\tau^*\approx 0.743$ and $\tau^{**}\approx 1.568$ in the interval $I_2$, i.e., there are two critical values of the maturation delay of natural enemies at which the stability switching occurs (see Figure \ref{criticalvalue}).

 \begin{figure}[!]
\centering
\includegraphics[height=4in,width=5.5in]{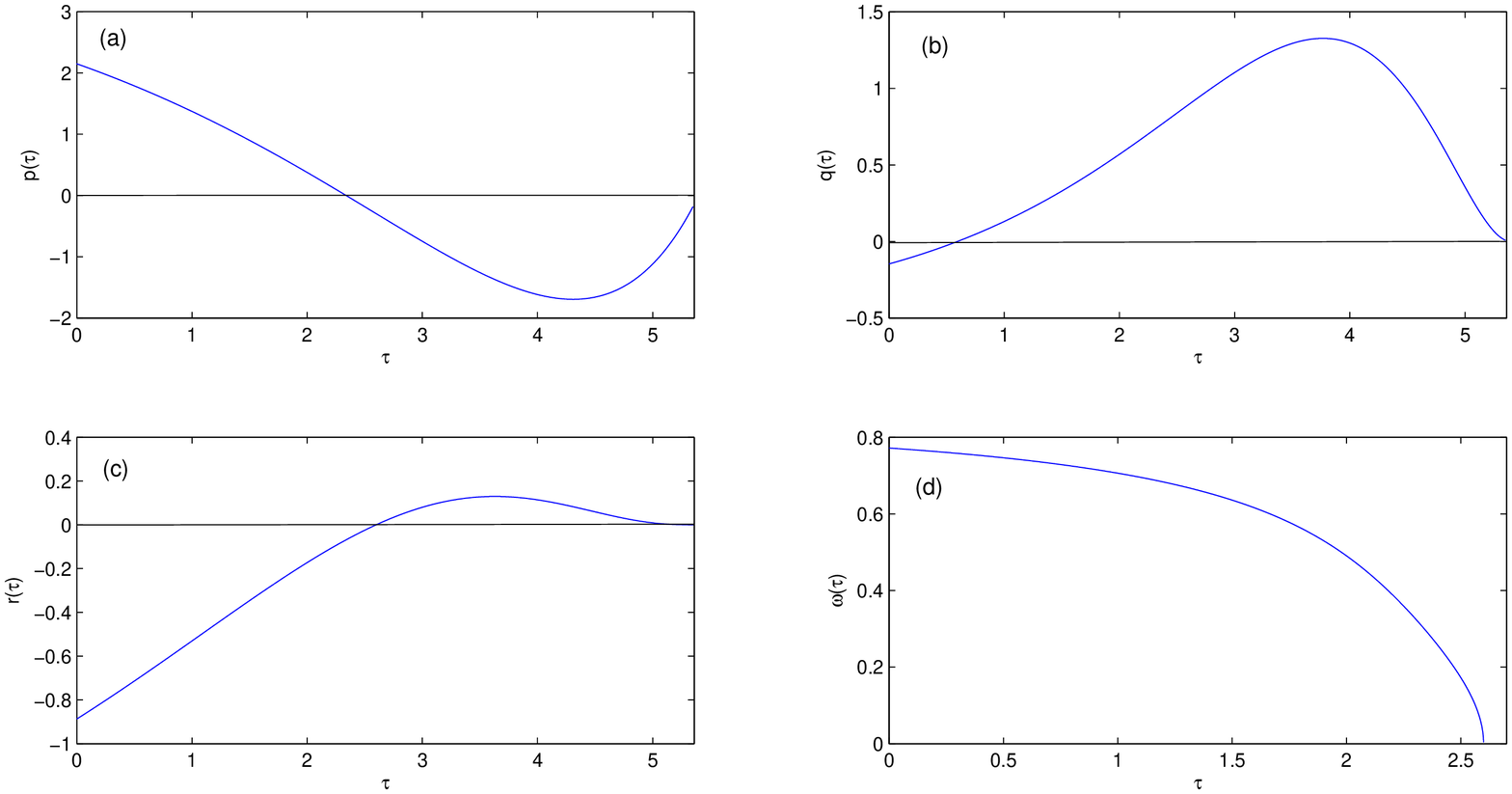}\\
\caption{(a) $p(\tau)$;  (b) $q(\tau)$  and (c) $r(\tau)$ in the interval $[0,5.34608)$; (d) curve of $\omega(\tau)$ on $\tau\in[0,2.59955)$.}
\label{omegapositive}
\end{figure}

\begin{figure}[!]
\centering
\includegraphics[height=2in,width=5.5in]{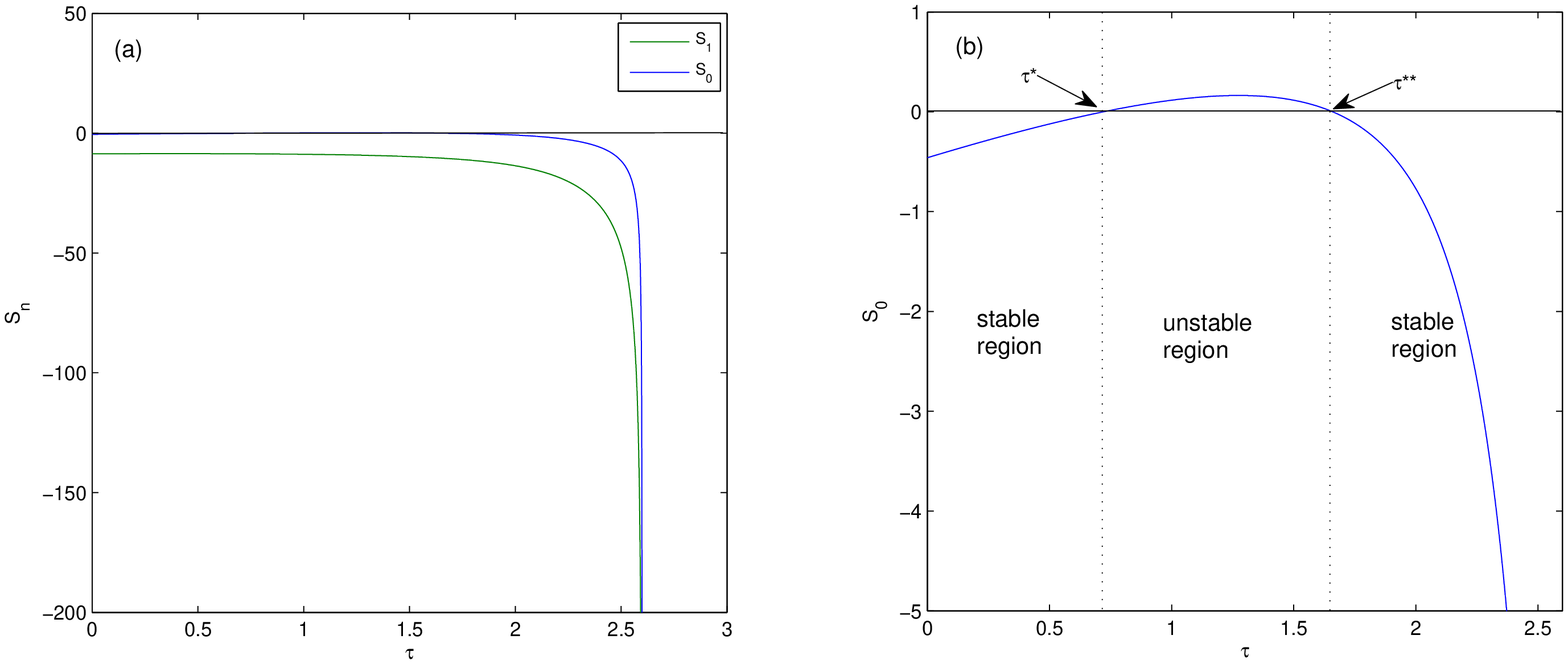}\\
\caption{(a) Graphs of functions $S_0$ and $S_1$ for $\tau\in[0,2.59955)$; (b) curve of $S_0$ for the same value of $\tau$.}
\label{criticalvalue}
\end{figure}
\par Using the same set of parametric values, it is obtained that the interior equilibrium $E_3(x^*,y^*,z_2^*)$ is locally asymptotically stable if $\tau<\tau^*=0.743$ and a Hopf bifurcation occurs if $\tau\geq 0.743$, see Figures \ref{stabilityswitch1}(a) and \ref{stabilityswitch1}(a). The equilibrium again becomes locally asymptotically when $\tau>\tau^{**}=1.568$, see Figure \ref{stabilityswitch2}. Thus the interior equilibrium of the system (\ref{numeq}) is locally asymptotically stable for $\tau\in [0, 0.743)\cup (1.568,5.34608]$ and is unstable for $\tau\in (\tau^*,\tau^{**})$. Hence the stability switches from \emph{stability-instability-stability} occurs. This is the verification of the Theorem \ref{theoremforbifurcation}.
\par Furthermore, for the system (\ref{numeq}), it is clear that $Re(c_1(0))|_{\tau=\tau^*}=-3.9481$ $<0$ and $Re(c_1(0))|_{\tau=\tau^{**}}= 9.3706>0$, according to the formula given in section \ref{direction}. Therefore, Hopf bifurcation for the interior equilibrium at $\tau^*$ (resp. $\tau^{**}$) is forward (resp. backward) and the bifurcating periodic solution on the center manifold are orbitally asymptotically stable (see Figure \ref{bif_per_sol}). Finally, we observe with the following set of parameters: $a_1=7,\  b_1 =1,\  c_1 = 1,\ c_2 =0.5,\ d_1 = 0.05, \ d_2 = 0.6,\ d_3 =1.2,\ \alpha_1 = 1.5,\ \alpha_2 =2$, then  system (\ref{eq2.1}) has a complex dynamics of multiple bifurcation (i.e., chaos) for the maturation delay $\tau=1.5$ (see Figure \ref{chaos}).

\begin{figure}[!]
\centering
\includegraphics[height=2in,width=5.5in]{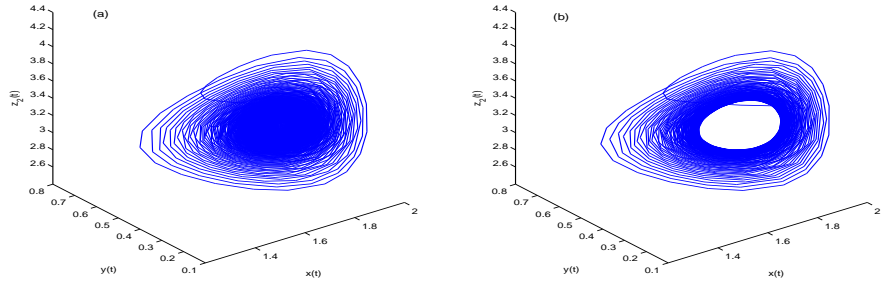}\\
\caption{The dynamics of the interior equilibrium for first critical value of maturation delay $\tau$: (a) $E_3$ is locally asymptotically stable for $\tau=0.742<\tau^*=0.743$; (b) occurrence of Hopf bifurcation at $E_3$ for $\tau=0.75>\tau^*=0.743$.}
\label{stabilityswitch1}
\end{figure}
\begin{figure}[!]
\centering
\includegraphics[height=2in,width=5.5in]{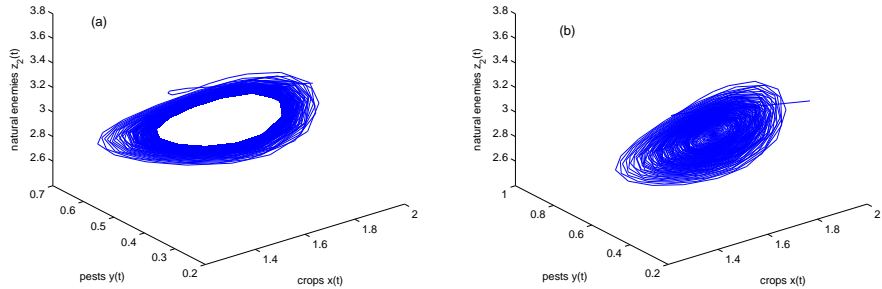}\\
\caption{The dynamics of the interior equilibrium for second critical value of maturation delay $\tau$: (a) occurrence of Hopf bifurcation at $E_3$ for $\tau=1.56<\tau^{**}=1.568$; (b) $E_3$ is locally asymptotically stable for $\tau=1.57>\tau^{**}=1.568$.}
\label{stabilityswitch2}
\end{figure}
\begin{figure}[!]
\centering
\includegraphics[height=2in,width=5.5in]{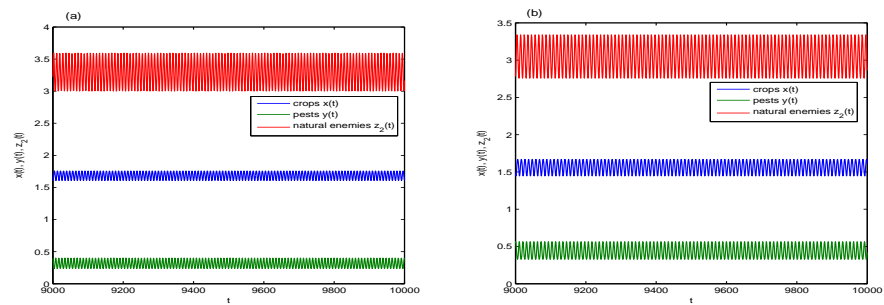}\\
\caption{(a) The bifurcated periodic solution for the interior equilibrium is locally asymptotically stable at $\tau^*=0.743$; (b) The bifurcated periodic solution for the interior equilibrium is locally asymptotically stable at $\tau^{**}=1.568$.}
\label{bif_per_sol}
\end{figure}
\begin{figure}[!]
\centering
\includegraphics[height=2in,width=5.5in]{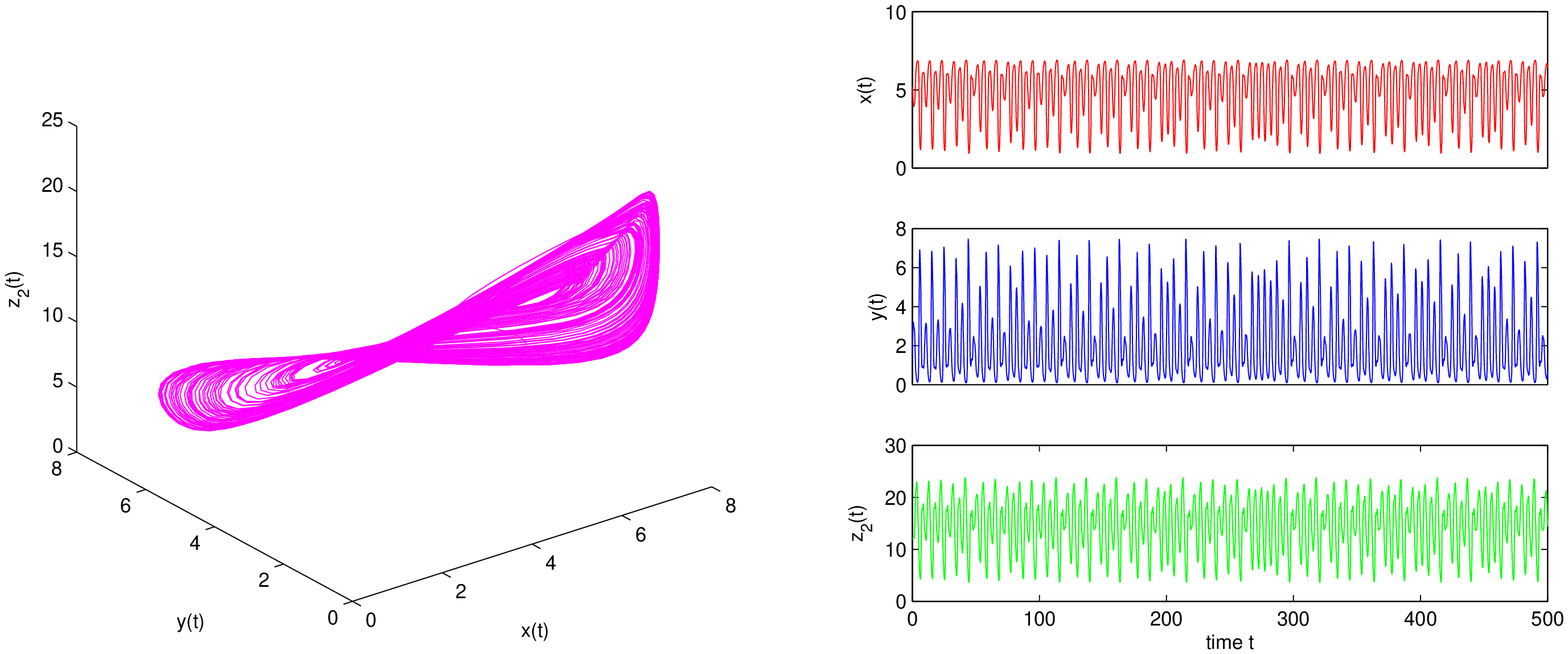}\\
\caption{The chaotic behavior of the system at the interior equilibrium for parameter set: $a_1=7,\  b_1 =1,\  c_1 = 1,\ c_2 =0.5,\ d_1 = 0.05, \ d_2 = 0.6,\ d_3 =1.2,\ \alpha_1 = 1.5,\ \alpha_2 =2$ and $\tau=1.5$.}
\label{chaos}
\end{figure}
\section{Conclusion}
In this paper, we have proposed a three species crop-pest-natural enemy food chain mathematical model with stage structure and maturation delay for the natural enemy. We have studied the local stability of four nonnegative equilibria of the system (\ref{eq2.1}). It is found that the trivial equilibrium $E_0(0,0,0)$ is always unstable; the boundary equilibrium $E_1(a_1/b_1,0,0)$ is locally asymptotically stable if $a_1\alpha_1<b_1d_1$. Further, the planner equilibrium $E_2(\bar{x},\bar{y},0)$ is locally asymptotically stable if $a_1\alpha_1>b_1d_1$ and $\tau>(1/d_2)\log(\alpha_2\bar{y}/d_3)$, otherwise, it is unstable. The interior equilibrium is locally asymptotically stable if $\tau<\tau^*$, a Hopf bifurcation occurs in the interval $\tau^*<\tau<\tau^{**}$ and if the maturation delay crossed the second critical $\tau^{**}$, then the interior equilibrium becomes again stable. Thus the maturation delay plays an important role in switching of stability from \emph{stability-instability-stability}. Furthermore, using a numerical simulation, we obtained that the existence of  bifurcation and also observed the chaotic behavior of the system for a particular range of the maturation delay. In particular, it is observed that the larger maturation delay may lead to extinction of the natural enemy, i.e., natural enemies may extinct due to its stage structure.  This shows that the maturation delay of natural enemy is the controlling parameter for the pest population. Thus stage structure has a great importance in the dynamics  of the three species crop-pest-natural enemy food chain.
\appendix
\section{Appendix}\label{appA}
Let, $x_1=x-x^*$, $x_2=y-y^*$, $x_3=z_2-z_2^*$, $\bar{x}_i(t)=x_i(\tau t)$, $\tau=\tau^*+\mu$ and dropping the bars for simplification
of notations, system (\ref{eq3.1}) is transformed into an FDE in $C=C([-1,0],R^3)$ as
\begin{equation}
\dot{x}(t)=L_{\mu}(x_t)+F(\mu,x_t),
\label{normalize}\end{equation}
where $x(t)=(x_1(t),x_2(t),x_3(t))^T\in R^3$ and $L_{\mu}:C\rightarrow R$, $F:C\times R\rightarrow R$ are given, respectively, by
\begin{eqnarray}
L_{\mu}(\phi)&=&(\tau^*+\mu)
\left(
\begin{array}{ccc}
 -b_1x^* & -c_1x^* & 0 \\
 \alpha_1 y^* & 0 & -c_2y^* \\
 0 & 0 & -d_3 \\
\end{array}
\right)
\left(
  \begin{array}{c}
    \phi_1(0) \\
    \phi_2(0) \\
    \phi_3(0) \\
  \end{array}
\right)\nonumber\\
& & +
(\tau^*+\mu)
\left(
\begin{array}{ccc}
 0 & 0 & 0 \\
 0 & 0 & 0 \\
 0 & \alpha_2z_2^*e^{-d_2(\tau^*+\mu)} & \alpha_2y^*e^{-d_2(\tau^*+\mu)}\nonumber \\
\end{array}
\right)
\left(
  \begin{array}{c}
    \phi_1(-1) \\
    \phi_2(-1) \\
    \phi_3(-1) \\
  \end{array}
\right),\\
\label{Lmu}\end{eqnarray}
and
\begin{equation}
F(\mu,\phi)=(\tau^*+\mu)\left(
                         \begin{array}{c}
                           -b_1\phi_1^2(0)-c_1\phi_1(0)\phi_2(0) \\
                           \alpha_1\phi_1(0)\phi_2(0)-c_2\phi_2(0)\phi_3(0) \\
                           \alpha_2e^{-d_2(\tau^*+\mu)}\phi_2(-1)\phi_3(-1) \\
                         \end{array}
                       \right)
\label{Fmuxt},\end{equation}
where $\phi=(\phi_1,\phi_2,\phi_3)^T\in C$. By the Riesz representation theorem, there exists a function $\eta (\theta,\mu)$ of
bounded variation for $\theta\in[-1,0]$ such that
\begin{equation}
L_{\mu}\phi=\int_{-1}^0\eta(\theta,\mu)\phi(\theta) \ \ \  \mbox{for} \  \theta\in C.
\label{Lmuother}\end{equation}
In fact, we can choose
\begin{eqnarray}
\eta(\theta,\mu)&=&(\tau^*+\mu)
\left(
\begin{array}{ccc}
 -b_1x^* & -c_1x^* & 0 \\
 \alpha_1 y^* & 0 & -c_2y^* \\
 0 & 0 & -d_3 \\
\end{array}
\right)
\delta (\theta)\nonumber\\
& & -
(\tau^*+\mu)
\left(
\begin{array}{ccc}
 0 & 0 & 0 \\
 0 & 0 & 0 \\
 0 & \alpha_2z_2^*e^{-d_2(\tau^*+\mu)} & \alpha_2y^*e^{-d_2(\tau^*+\mu)}\nonumber \\
\end{array}
\right)
\delta(\theta+1),\\
\label{etathetamu}
\end{eqnarray}
where $\delta$ is Dirac delta function. For $\phi\in C^1([-1,0],R^3)$, define
\begin{equation}
A(\mu)\phi=\left\{ \begin{array}{ll}
                     \frac{d\phi(\theta)}{d\theta},&  \theta\in [-1,0), \\[0.1in]
                     \int_{-1}^0 d\eta(s,\mu)\phi(s),& \theta=0,
                   \end{array}\right.
\label{Amuphi}\end{equation}
and
\begin{equation}
R(\mu)\phi=\left\{ \begin{array}{ll}
                     0, & \theta\in [-1,0), \\[0.1in]
                     F(\mu,\phi), & \theta=0.
                   \end{array}\right.
\label{Rmuphi}\end{equation}
Then the system (\ref{normalize}) is equivalent to
\begin{equation}
\dot{x}_t=A(\mu)x_t+R(\mu)x_t,
\label{x_t}\end{equation}
where $x_t(\theta)=x(t+\theta)$ for $\theta\in [-1,0]$.
For $\psi\in C^1([0,1],(R^3)^*)$, define
\begin{equation}
A^*\psi(s)=\left\{ \begin{array}{ll}
                     \frac{-d\psi(s)}{ds}, & s\in (0,1], \\[0.1in]
                     \int_{-1}^0 d\eta^T(s,0)\psi(s), & s=0,
                   \end{array}\right.
\label{A*}\end{equation}
and a bilinear inner product
\begin{equation}
\langle \psi(s),\phi(\theta)\rangle =\bar{\psi}(0)\phi(0)-\int_{-1}^0\int_{\xi=0}^{\theta}\bar{\psi}(\xi-\theta)d\eta(\theta)\phi(\xi)d\xi,
\label{innerproduct}\end{equation}
where $\eta(\theta)=\eta(\theta,0)$. Then $A(0)$ and $A^*$ are adjoint operators. By the discussion in section \ref{stability}, we know that $\pm i\omega^*\tau^*$ are eigenvalues of $A(0)$. Thus, they are also eigenvalues of $A^*$. We need to compute the eigenvector of $A(0)$ and $A^*$ corresponding to $i\omega^*\tau^*$ and $-i\omega^*\tau^*$, respectively.
\par Suppose that $q(\theta)=(1,\alpha,\beta)^Te^{i\theta\omega^*\tau^*}$ is the eigenvector of $A(0)$ corresponding to $i\omega^*\tau^*$, then $A(0)q(\theta)=i\omega^*\tau^*q(\theta)$. It follows from the definition of $A(0)$ and $\eta(\theta,\mu)$ that
\[\tau^* \left(
           \begin{array}{ccc}
             i\omega^*+b_1x^* & c_1x^* & 0 \\
             -\alpha_1y^* & i\omega^* & c_2y^* \\
             0 & -\alpha_2z_2^*e^{-d_2\tau^*}e{-i\omega^*\tau^*} & i\omega^*-\alpha_2y^*e^{-d_2\tau^*}e{-i\omega^*\tau^*} \\
           \end{array}
         \right)q(0)=0.
 \]
Then, we can easily obtain
\[q(0)=(1,\alpha,\beta)^T,\]
where $\alpha=-\frac{i\omega^*+b_1 x^*}{c_1x^*}$ and $\beta=\frac{\alpha\alpha_2 z_2^* e^{-d_2\tau^*} e^{-i\omega^*\tau^*} }{i\omega^*+ d_3- \alpha_2 y^* e^{-d_2\tau^*} e^{-i\omega^*\tau^*}}$.\\
Similarly, let $q^*(\theta)=D(1,\alpha^*,\beta^*)e^{-i\theta\omega^*\tau^*}$  be the eigenvector of $A^*$ corresponding to  $-i\omega^*\tau^*$, then similarly we can obtain
\[\alpha^*=\frac{b_1x^*-i\omega^*}{\alpha_1y^*}, \ \ \beta^*=\frac{c_2y^*(-i\omega^*+b_1x^*)}{\alpha_1y^*(i\omega^*-d_3+\alpha_2y^*e^{-d_2\tau^*}e^{i\omega^*\tau^*})}. \]
By (\ref{innerproduct}) we get
\begin{eqnarray}
\langle q^*(s),q(\theta)\rangle &=& \bar{D}(1,\bar{\alpha}^*,\bar{\beta}^*)(1,\alpha,\beta)^T\nonumber\\
& &-\int_{-1}^0\int_{\xi=0}^{\theta}\bar{D}(1,\bar{\alpha}^*,\bar{\beta}^*)e^{-\omega^*\tau^*(\xi-\theta)}d\eta(\theta)(1,\alpha,\beta)^Te^{i\omega^*\tau^*\xi}d\xi\nonumber\\
&=& \bar{D}\left[1+\alpha\bar{\alpha}^*+\beta\bar{\beta}^*-(1,\bar{\alpha}^*,\bar{\beta}^*)\int_{-1}^0\phi(\theta)d\eta(\theta)(1,\alpha,\beta)^T  \right]\nonumber\\
& =& \bar{D}\left[ 1+\alpha\bar{\alpha}^*+\beta\bar{\beta}^*+\tau^*\left( \alpha z_2^*+\beta y^*\right)\alpha_2\bar{\beta}^*e^{-d_2\tau^*}e^{-i\omega^*\tau^*}\right]\nonumber.
\end{eqnarray}
Then we choose
\[ \bar{D}=\frac{1}{1+\alpha\bar{\alpha}^*+\beta\bar{\beta}^*+\tau^*\left( \alpha z_2^*+\beta y^*\right)\alpha_2\bar{\beta}^*e^{-d_2\tau^*}e^{-i\omega^*\tau^*}},\]
such that $\langle q^*(s),q(\theta)\rangle=1$ and $\langle q^*(s),\bar{q}(\theta)\rangle=0$.
\par In the following, we use the ideas in Hassard et al. \citep{HassardKazarinoff1981} to compute the coordinates describing center manifold $C_0$ at $\mu=0$. Define
\begin{equation}
z(t)=\langle q^*,x_t\rangle,\ W(t,\theta)=x_t(0)-2Re[z(t)q(\theta)],\label{Wttheta}\end{equation}
On the center manifold $C_0$, we have
\begin{equation}
W(t,\theta)=W(z(t),\bar{z}(t),\theta)=W_{20}(\theta)\frac{z^2}{2}+W_{11}(\theta)z\bar{z}+W_{02}(\theta)\frac{\bar{z}^2}{2}+\cdots \label{Wttheta2},
\end{equation}
where $z$ and $\bar{z}$ are local coordinates for $C_0$ in $C$ in the direction of $q^*$ and $\bar{q}^*$. Note that $W$ is real if $x_t$ is real. We deal only with the real solution. For solution $x_t\in C_0$ of (\ref{x_t}), since $\mu=0$, we have
\begin{eqnarray}
 \dot{z}(t)&=&i\omega^*\tau^*z+\bar{q}^*(0)F(0,W(z,\bar{z},0)+2Re\{zq(0)\})\nonumber\\
 &\stackrel{def}{=}& i \omega^*\tau^*+\bar{q}^*(0)F_0(z,\bar{z})=i\omega^*\tau^*+g(z,\bar{z}),\nonumber
\end{eqnarray}
where
\begin{equation}
g(z,\bar{z})=\bar{q}^*(0)F_0(z,\bar{z})=g_{20}(\theta)\frac{z^2}{2}+g_{11}(\theta)z\bar{z}+g_{02}(\theta)\frac{\bar{z}^2}{2}+\cdots.
\label{gzzbar}\end{equation}
From (\ref{Wttheta}) and (\ref{Wttheta2}), we have
\[ x_t(\theta) =(x_{1t}(\theta),x_{2t}(\theta),x_{3t}(\theta))=W(t,\theta)+zq(\theta)+\overline{zq(\theta)},\]
and
\[ q(\theta)=(1,\alpha,\beta)^Te^{i\theta\omega^*\tau^*}.\]
Thus, we can easily obtain that
\[ x_{1t}(0)=W_{20}^{(1)}(0)\frac{z^2}{2}+W_{11}^{(1)}(0)z\bar{z}+W_{02}^{(1)}(0)\frac{\bar{z}^2}{2}+z+\bar{z}+O\left(\left| (z,\bar{z})\right|^3 \right),\]
\[ x_{2t}(0)=W_{20}^{(1)}(0)\frac{z^2}{2}+W_{11}^{(1)}(0)z\bar{z}+W_{02}^{(1)}(0)\frac{\bar{z}^2}{2}+\alpha z+\overline{\alpha z}+O\left(\left| (z,\bar{z})\right|^3 \right),\]
\[ x_{3t}(0)=W_{20}^{(1)}(0)\frac{z^2}{2}+W_{11}^{(1)}(0)z\bar{z}+W_{02}^{(1)}(0)\frac{\bar{z}^2}{2}+\beta z+\overline{\beta z}+O\left(\left| (z,\bar{z})\right|^3 \right),\]
\begin{eqnarray}
x_{2t}(-1)=W_{20}^{(1)}(-1)\frac{z^2}{2}+W_{11}^{(1)}(-1)z\bar{z}&+&W_{02}^{(1)}(-1)\frac{\bar{z}^2}{2}+\alpha z e^{-i \omega^*\tau^*}\nonumber\\
 &+& \overline{\alpha z}e^{i \omega^*\tau^*}+ O\left(\left| (z,\bar{z})\right|^3 \right),\nonumber\\
x_{3t}(-1)=W_{20}^{(1)}(-1)\frac{z^2}{2}+W_{11}^{(1)}(-1)z\bar{z}&+&W_{02}^{(1)}(-1)\frac{\bar{z}^2}{2}+\beta z e^{-i \omega^*\tau^*}\nonumber\\
& + &\overline{\beta z}e^{i \omega^*\tau^*}+ O\left(\left| (z,\bar{z})\right|^3 \right).\nonumber
\end{eqnarray}
From the definition of $F(\mu,x_t)$, we have
\begin{eqnarray}
g(z,\bar{z})& = & \tau^*\bar{D}(1,\bar{\alpha}^*,\bar{\beta}^*)\left(
                         \begin{array}{c}
                           -b_1x_{1t}^2(0)-c_1x_{1t}(0)x_{2t}(0) \\
                           \alpha_1x_{1t}(0)x_{2t}(0)-c_2{x_2t}(0)x_{3t}(0) \\
                           \alpha_2e^{-d_2\tau^*}x_{2t}(-1)x_{3t}(-1) \\
                         \end{array}
                       \right)\nonumber\\
&=& \tau^* \bar{D}\left\{ z^2\left[ -b_1-\alpha(c_1-\alpha_1\bar{\alpha}^*)-c_2\alpha\beta\bar{\alpha}^*+\alpha\alpha_2\beta\bar{\beta}^* e^{-d_2\tau^*}e^{-2i\omega^*\tau^*}\right]\right.\nonumber\\
& & + 2z\bar{z}\left[-b_1-(c_1-\alpha_1\bar{\alpha}^*)Re\{\alpha\}-c_2Re\{\alpha\bar{\beta}\}+\alpha_2\bar{\beta}^* e^{-d_2\tau^*}Re\{\alpha\bar{\beta}\} \right]\nonumber\\
& & +\bar{z}^2\left[-b_1-(c_1-\alpha_1\bar{\alpha}^*)\bar{\alpha}-c_2\bar{\alpha}^*\bar{\alpha}\bar{\beta}+\alpha_2\bar{\beta}^*\bar{\alpha}\bar{\beta} e^{-d_2\tau^*}e^{2i\omega^*\tau^*} \right]\nonumber\\
& & + \frac{1}{2}z^2\bar{z}\left[-2b_1W_{20}^{(1)}(0)-4b_1W_{11}^{(1)}(0) \right.\nonumber\\
& & - (c_1-\alpha_1\bar{\alpha}^*)\left( \bar{\alpha}W_{20}^{(1)}(0)+2\alpha W_{11}^{(1)}(0)+2W_{11}^{(2)}(0)+W_{20}^{(2)}(0)\right)\nonumber\\
& & -c_2\bar{\alpha}^*\left( \bar{\beta}W_{20}^{(2)}(0)+2\beta W_{11}^{(2)}(0)+2\alpha W_{11}^{(3)}(0)+\bar{\alpha}W_{20}^{(3)}(0)\right)\nonumber\\
& & + \alpha_2 \bar{\beta}^* e^{-d_2\tau^*}\left( \bar{\beta}e^{i\omega^*\tau^*}W_{20}^{(2)}(-1)+2\beta e^{-i\omega^*\tau^*}W_{11}^{(2)}(-1)\right.\nonumber\\
& & + \left. \left.\left. 2\alpha e^{-i\omega^*\tau^*}W_{11}^{(3)}(-1)+\bar{\alpha}e^{i\omega^*\tau^*}W_{20}^{(3)}(-1)\right)\right]\right\}.\nonumber
\end{eqnarray}
Comparing the coefficients with (\ref{gzzbar}), we obtain
\[g_{20}=2\tau^*\bar{D}[-b_1-(c_1-\alpha_1\bar{\alpha}^*)\alpha-c_2\alpha\beta\bar{\alpha}^*+\alpha\alpha_2\beta\bar{\beta}^* e^{-d_2\tau^*}e^{-2i\omega^*\tau^*}], \]
\[g_{11}=2\tau^*\bar{D}[-b_1-(c_1-\alpha_1\bar{\alpha}^*)Re\{\alpha\}-c_2Re\{\alpha\bar{\beta}\}+\alpha_2\bar{\beta}^* e^{-d_2\tau^*}Re\{\alpha\bar{\beta}\}], \]
\[g_{02}=2\tau^*\bar{D}[-b_1-(c_1-\alpha_1\bar{\alpha}^*)\bar{\alpha}-c_2\bar{\alpha}^*\bar{\alpha}\bar{\beta}+\alpha_2\bar{\beta}^*\bar{\alpha}\bar{\beta} e^{-d_2\tau^*}e^{2i\omega^*\tau^*}], \]
\begin{eqnarray}
g_{21}& =& \tau^*\bar{D}\left[-b_1\left(2W_{20}^{(1)}(0)+4W_{11}^{(1)}(0)\right)\right.\nonumber\\
& & - (c_1-\alpha_1\bar{\alpha}^*)\left(\bar{\alpha}W_{20}^{(1)}(0)+2\alpha W_{11}^{(1)}(0)+2W_{11}^{(2)}(0)+W_{20}^{(2)}(0)\right)\nonumber\\
& & -c_2\bar{\alpha}^*\left( \bar{\beta}W_{20}^{(2)}(0)+2\beta W_{11}^{(2)}(0)+2\alpha W_{11}^{(3)}(0)+\bar{\alpha}W_{20}^{(3)}(0)\right)\nonumber\\
& & + \alpha_2 \bar{\beta}^* e^{-d_2\tau^*}\left( \bar{\beta}e^{i\omega^*\tau^*}W_{20}^{(2)}(-1)+2\beta e^{-i\omega^*\tau^*}W_{11}^{(2)}(-1)\right.\nonumber\\
& & + \left. \left. 2\alpha e^{-i\omega^*\tau^*}W_{11}^{(3)}(-1)+\bar{\alpha}e^{i\omega^*\tau^*}W_{20}^{(3)}(-1)\right)\right].\nonumber
\end{eqnarray}
In order to determine $g_{21}$, we need to compute $W_{20}(\theta)$ and $W_{11}(\theta)$. From (\ref{x_t}) and (\ref{Wttheta}), we have
\begin{eqnarray}
\dot{W}&=&\dot{x_t}-zq-\dot{\overline{zq}}=\left\{\begin{array}{ll}
                                                  AW-2Re[\bar{q}^*(0)F_0q(\theta)],  &\theta\in[-1,0),  \\[0.1in]
                                                  AW-2Re[\bar{q}^*(0)F_0q(\theta)]+F_0, & \theta=0,
                                                \end{array}
                                              \right.\nonumber\\
&\stackrel{def}{=}& AW+H(z,\bar{z},\theta),
\label{Wdot}
\end{eqnarray}
where
\begin{equation}
H(z,\bar{z},\theta)=H_{20}(\theta)\frac{z^2}{2}+H_{11}(\theta)z\bar{z}+H_{02}(\theta)\frac{\bar{z}^2}{2}+\cdots.
\label{Hzzbar}\end{equation}
Note that on center manifold $C_0$ near the origin
\[ \dot{W}=W_z\dot{z}+W_{\bar{z}}\dot{\bar{z}}.\]
Thus, we obtain
\begin{equation}
 (A-2i\omega^*\tau^*)W_{20}(\theta)=-H_{20}(\theta),\ AW_{11}(\theta)=-H_{11}(\theta).
\label{H20H11}\end{equation}
Comparing the coefficient with (\ref{Hzzbar}) gives that
\begin{equation}
H_{20}(\theta)=-g_{20}q(\theta)-\bar{g}_{02}\bar{q}(\theta), \ H_{11}(\theta)=-g_{11}q(\theta)-\bar{g}_{11}\bar{q}(\theta).
\label{H20H11other}\end{equation}
From (\ref{H20H11}), (\ref{H20H11other}) and the definition of $A$, we have
\[ \dot{W}_{20}(\theta)=2i\omega^*\tau^*W_{20}(\theta)+g_{20}q(\theta)+\bar{g}_{02}\bar{q}(\theta).\]
Noting $q(\theta)=q(0)e^{i\omega^*\tau^*\theta}$, hence
\begin{equation}
W_{20}(\theta)=\frac{ig_{20}}{\omega^*\tau^*}q(0)e^{i\omega^*\tau^*\theta}+\frac{i\bar{g}_{02}}{3\omega^*\tau^*}\bar{q}(0)e^{-2i\omega^*\tau^*\theta}+E_1e^{2i\omega^*\tau^*\theta},
\label{W20theta}\end{equation}
where $E_1=(E_1^{(1)},E_1^{(2)},E_1^{(3)})\in R^3$ is a constant vector.
Similarly, from (\ref{H20H11}) and (\ref{H20H11other}), we obtain
\begin{equation}
W_{11}(\theta)=-\frac{ig_{11}}{\omega^*\tau^*}q(0)e^{i\omega^*\tau^*\theta}+\frac{i\bar{g}_{11}}{\omega^*\tau^*}\bar{q}(0)e^{-i\omega^*\tau^*\theta}+E_2,\label{W11theta}\end{equation}
where $E_2=(E_2^{(1)},E_2^{(2)},E_2^{(3)})\in R^3$ is also a constant vector.
\par In the following we shall find out $E_1$ and $E_2$. From the definition of $A$ and (\ref{H20H11}), we can obtain
\begin{equation}
\int_{-1}^0d\eta(\theta)W_{20}(\theta)=2i\omega^*\tau^*W_{20}(\theta)-H_{20}(0),
\label{first}\end{equation}
and
\begin{equation}
\int_{-1}^0d\eta(\theta)W_{11}(\theta)=-H_{11}(0),
\label{second}\end{equation}
where $\eta(\theta)=\eta(0,\theta)$. From (\ref{Wdot}) and (\ref{Hzzbar}), we have
\begin{equation}
H_{20}(0)=-g_{20}q(0)-\bar{g}_{02}\bar{q}(0)+2\tau^*\left(
                                                      \begin{array}{c}
                                                        -b_1-c_1\alpha \\
                                                        \alpha\alpha_1-c_2\alpha\beta \\
                                                        \alpha_2\alpha\beta e^{-d_2\tau^*}e^{-2i\omega^*\tau^*} \\
                                                      \end{array}
                                                    \right),
\label{H20theta}\end{equation}
and
\begin{equation}
H_{11}(0)=-g_{11}q(0)-\bar{g}_{11}\bar{q}(0)+2\tau^*\left(
                                                      \begin{array}{c}
                                                        -b_1-c_1Re\{\alpha\} \\
                                                        \alpha_1Re\{\alpha\}-c_2Re\{\alpha\bar{\beta}\} \\
                                                        \alpha_2e^{-d_2\tau^*}Re\{\alpha\bar{\beta} \\
                                                      \end{array}
                                                    \right).
\label{H11theta}\end{equation}
Substituting (\ref{W20theta}) and (\ref{W11theta}) into (\ref{first}) and noticing that
\[\left(i\omega^*\tau^*I-\int_{-1}^0e^{i\omega^*\tau^*\theta}d\eta(\theta)\right)q(0)=0, \]
and
\[ \left(-i\omega^*\tau^*I-\int_{-1}^0e^{-i\omega^*\tau^*\theta}d\eta(\theta)\right)\bar{q}(0)=0, \]
we obtain
\[\left(2i\omega^*\tau^*I-\int_{-1}^0e^{2i\omega^*\tau^*\theta}d\eta(\theta)\right)E_1= 2\tau^*\left(
                                                      \begin{array}{c}
                                                        -b_1-c_1\alpha \\
                                                        \alpha\alpha_1-c_2\alpha\beta \\
                                                        \alpha_2\alpha\beta e^{-d_2\tau^*}e^{-2i\omega^*\tau^*} \\
                                                      \end{array}
                                                    \right),
\]
which leads to
\[\left(
    \begin{array}{ccc}
      2i\omega^*+b_1x^* & c_1x^* & 0 \\
      -\alpha_1y^* & 2i\omega^* & c_2y^* \\
      0 & -\alpha_2z_2^*e^{-d_2\tau^*}e^{-2i\omega^*\tau^*} & \alpha_2y^*e^{-d_2\tau^*}e^{-2i\omega^*\tau^*} \\
    \end{array}
  \right)
E_1\]
\[\ \ \ =2\left(
                                                      \begin{array}{c}
                                                        -b_1-c_1\alpha \\
                                                        \alpha\alpha_1-c_2\alpha\beta \\
                                                        \alpha_2\alpha\beta e^{-d_2\tau^*}e^{-2i\omega^*\tau^*} \\
                                                      \end{array}
                                                    \right).
\]
It follows that
\begin{eqnarray}
E_1&=& 2{\left(
    \begin{array}{ccc}
      2i\omega^*+b_1x^* & c_1x^* & 0 \\
      -\alpha_1y^* & 2i\omega^* & c_2y^* \\
      0 & -\alpha_2z_2^*e^{-d_2\tau^*}e^{-2i\omega^*\tau^*} & \beta_2-\alpha_2y^*e^{-d_2\tau^*}e^{-2i\omega^*\tau^*} \\
    \end{array}
  \right)
}^{-1}\nonumber\\
& & \times \left(
                                                      \begin{array}{c}
                                                        -b_1-c_1\alpha \\
                                                        \alpha\alpha_1-c_2\alpha\beta \\
                                                        \alpha_2\alpha\beta e^{-d_2\tau^*}e^{-2i\omega^*\tau^*} \\
                                                      \end{array}
                                                    \right).\nonumber
\end{eqnarray}
Similarly, substituting (\ref{W11theta}) and (\ref{H11theta}) into (\ref{second}), we can get
\begin{equation}
E_2 = 2{\left(
    \begin{array}{ccc}
      b_1x^* & c_1x^* & 0 \\
      -\alpha_1y^* & 0 & c_2y^* \\
      0 & -\alpha_2z_2^*e^{-d_2\tau^*} & \beta_2-\alpha_2y^*e^{-d_2\tau^*} \\
    \end{array}
  \right)
}^{-1}
\times\left(
                                                      \begin{array}{c}
                                                        -b_1-c_1Re\{\alpha\} \\
                                                        \alpha_1Re\{\alpha\}-c_2Re\{\alpha\bar{\beta}\} \\
                                                        \alpha_2e^{-d_2\tau^*}Re\{\alpha\bar{\beta}\} \\
                                                      \end{array}
                                                    \right).
                                                    \end{equation}
Thus, we can determine $W_{20}$ and $W_{11}$ from (\ref{W20theta}) and (\ref{W11theta}).

%
\end{document}